\newtheorem{thm}{Theorem}[section]
\newtheorem{prop}{Proposition}[section]
\newtheorem{rmk}{Remark}[section]
\newtheorem{lem}{Lemma}[section]
\newtheorem{defi}{Definition}[section]
\newenvironment{proof}{\paragraph{Proof:}}{\hfill$\square$ \\}
\numberwithin{equation}{section}
\providecommand{\keywords}[1]
{
	\small	
	\textbf{\textit{Keywords:}} #1
}
\providecommand{\MSC}[1]
{
	\small	
	\textbf{\textit{2020 MSC:}} #1
}
\title{Convergence of the stochastic Navier-Stokes-$\alpha$ solutions toward the stochastic Navier-Stokes solutions}
\author[ ]{Jad Doghman \thanks{Corresponding author \newline Jad Doghman is supported by a public grant as part of the Investissement d'avenir project [ANR-11-LABX-0056-LMH, LabEx LMH], and both authors are part of the SIMALIN project [ANR-19-CE40-0016] of the French National Research Agency. \newline Email addresses: \texttt{jad.doghman@centralesupelec.fr} (J. Doghman),\newline {\mbox{ }\hspace{67pt}} \texttt{goudenege@math.cnrs.fr} (L. Gouden{\`e}ge)}}
\author[ ]{Ludovic Gouden{\`e}ge}
\affil[ ]{CNRS, F{\'e}d{\'e}ration de Math{\'e}matiques de CentraleSup{\'e}lec FR 3487, Universit{\'e} Paris-Saclay, CentraleSup{\'e}lec, 91190 Gif-sur-Yvette, France}
\date{}
\begin{document}
	\maketitle
	\begin{abstract}
		Loosely speaking, the Navier-Stokes-$\alpha$ model and the Navier-Stokes equations differ by a spatial filtration parametrized by a scale denoted $\alpha$. Starting from a strong two-dimensional solution to the Navier-Stokes-$\alpha$ model driven by a multiplicative noise, we demonstrate that it generates a strong solution to the stochastic Navier-Stokes equations under the condition $\alpha \to 0$. The initially introduced probability space and the Wiener process are maintained throughout the investigation, thanks to a local monotonicity property that abolishes the use of Skorokhod's theorem. High spatial regularity a priori estimates for the fluid velocity vector field are carried out within periodic boundary conditions.
	\end{abstract}
	
	\noindent\keywords{Navier-Stokes-$\alpha$, Navier-Stokes, multiplicative noise, cylindrical Wiener process, strong solutions}\newline
	\MSC{60H15, 60H30, 37L55, 35Q30, 35Q35, 76D05}
	
	\section{Introduction}
	To circumvent most of the Navier-Stokes drawbacks, a reasonable amount of Large Eddy Simulation (LES) models have been created and introduced to the fluid mechanics' literature. Among them is the Navier-Stokes-$\alpha$ (NS-$\alpha$) model, which made its appearance in \cite{Chen1999Foias, HolmMarsdenRatiu} and is known under the names: Lagrangian averaged Navier-Stokes (LANS-$\alpha$) equations \cite{Marden2001Shkoller} or the viscous Camassa-Holm problem \cite{Bjorland2008Schonbek}. Given a solution to the stochastic NS-$\alpha$ model:
	\begin{equation}\label{main equation}
		\begin{cases}
			\begin{aligned}
				&\frac{\partial}{\partial t}\left(\bar{u} -\alpha^{2}\Delta \bar{u}\right) - \nu\Delta\left(\bar{u} - \alpha^{2}\Delta \bar{u}\right) - \bar{u}\times\left(\nabla \times \left(\bar{u} - \alpha^{2}\Delta \bar{u}\right)\right) + \nabla \bar{p} = g(\cdot, \bar{u})\frac{\partial W}{\partial t}, 
			\end{aligned}\\
			div(\bar{u}) = 0, \\
			\bar{u}(0, \cdot) = \bar{u}_{0},
		\end{cases}
	\end{equation}
	the main interest in this paper is to check whether or not it converges toward a solution of the stochastic Navier-Stokes equations (NSEs) 
	\begin{equation}\label{eq NS}
		\begin{cases}
			\frac{\partial u}{\partial t} + -\nu\Delta u + [u\cdot\nabla]u + \nabla p = g(\cdot, u)\frac{\partial W}{\partial t}, \\
			div (u) = 0, \\
			u(0, \cdot) = \bar{u}_{0},
		\end{cases}
	\end{equation}
	when the spatial scale $\alpha$ tends to $0$. Both equations are equipped with the same configurations, including the initial datum $\bar{u}_{0}$ to guarantee a similar fluid state at time $t = 0$. The two-dimensional vectors $\bar{u}$ and $u$ denote the fluid velocities, the $\mathbb{R}$-valued quantities $p$ and $\bar{p}$ represent the pressure fields, the positive constant $\nu$ symbolizes the kinematic fluid viscosity, $\alpha$ is a small positive spatial scale at which the fluid motion is filtered, $g$ is a diffusion coefficient depending on the velocity vector field, and $W$ is an infinite-dimensional (possibly cylindrical) Wiener process. On account of the poor uniqueness properties of three-dimensional solutions to the stochastic NSEs, the conducted study herein will be limited to two dimensions to guarantee that the unique solution of the stochastic NS-$\alpha$ equations converges toward a sole one as $\alpha$ goes to $0$.
	
	In this paper, the study is accomplished through periodic boundary conditions for the sake of investigating the effect of $\alpha$ on the space regularity of a solution and taking advantage of the nonlinearity's properties that occur within this framework. It could have been carried out within Dirichlet boundary conditions if only the typical solution's space regularity was intended. Observe that $\alpha$ is always multiplied by $\Delta \bar{u}$ in equations~\eqref{main equation}, meaning that the extra granted regularity that does not figure in problem~\eqref{eq NS} can be loosened through a particular assumption on $\alpha$ when dealing with a finite-dimensional system, namely a Faedo-Galerkin approximation. The pressure field will be eliminated from the corresponding weak formulation throughout this work through the null divergence criterion, and the focus will be turned toward the velocity vector. Equations~\eqref{main equation} will be transformed into a coupled problem of second-order so that its form matches somehow that of system~\eqref{eq NS}, and the spatial scale $\alpha$ will be controlled by the inverse of a specific eigenvalue of the Stokes operator for the sake of absorbing the extra space regularity that is delivered by equations~\eqref{main equation}.
	
	Investigating the convergence of equations~\eqref{main equation} toward system~\eqref{eq NS} is beneficial because the principal reason for which the NS-$\alpha$ model was introduced is to overcome most of the Navier-Stokes shortcomings. If the converse scenario took place, equations~\eqref{main equation} would have become obsolete, but fortunately, it is not. This convergence was also conducted for the deterministic settings (i.e. when $g = 0$) in \cite{Cao2009Titi}, where the convergence rate in terms of $\alpha$ is revealed. The theoretical study herein has the advantage of building efficient numerical schemes for the stochastic Navier-Stokes problem while considering minimal assumptions on the spatial scale $\alpha$. Since $\alpha$ is solely involved with solutions' space regularity, any time discretization should not come into play in any further hypotheses upon $\alpha$.
	
	Equations~\eqref{main equation} were first inspected in \cite{Caraballo2006Takeshi, caraballo2005stochastic}, where the existence of a unique variational solution was proven. It is worth highlighting one drawback of this model relative to the pressure's regularity that appears after applying a generalization of the De Rham theorem \cite{pressure2003}, which links the velocity's smoothness to that of the pressure. In point of fact, it was shown (c.f. \cite[Theorem 3.3]{Caraballo2006Takeshi}) that $\bar{p}$ is $H^{-1}$-valued, meaning that it is lower than that of $p$, which is $L^{2}$-valued. This inconvenience originates from the biharmonic operator that appears in the first identity of system~\eqref{main equation} and might have an uncooperative effect on convergence rates of numerical schemes concerned with a non-null divergence of velocities. The same goes for other stochastic Navier-Stokes variants, such as the Leray-$\alpha$ model \cite{Deugoue2010Sango}. Further examinations of equations~\eqref{main equation} were performed in \cite{Deugoue2009Sango, deugoue2011weak}, including a splitting-up scheme in \cite{deugoue2014convergence}.
	
	This paper is organized as follows: all preliminaries, assumptions and configurations are presented in Section~\ref{section conf and material}, which allows the main theorem of this work to be stated in Section~\ref{section main result}, followed by Section~\ref{section Faedo-Galerkin} where the Faedo-Galerkin approximation of equations~\eqref{main equation} is exploited to acquire a finite-dimensional system, and a priori estimates are carried out within multiple spatial regularities. Section~\ref{section convergence} provides the convergence steps of the projected system, including the local monotonicity property, which is a prominent member of the demonstration. Finally, a conclusion regarding the accomplished analysis in the previous section, the relationship with the Navier-Stokes problem, and a few perspectives are given in Section~\ref{section conclusion}.
	
	\section{Configuration and materials}\label{section conf and material}
	Given a positive number $L$, the domain $D$ represents a two-dimensional torus $(0, L)^{2}$, and for a given $T>0$, the time interval reads $[0,T]$. Throughout this paper, the Lebesgue and Sobolev spaces are denoted $L^{p}$ and $H^{m}$ (or $W^{m, p}$) respectively, and for an arbitrary normed vector space $X$, its associated norm will be symbolized by $\left|\left|\cdot\right|\right|_{X}$. The notation $X_{per}$ signifies that all its members are periodic functions whose mean is null. Regarding the small spatial scale $\alpha$ that is present in equations~\eqref{main equation}, a special norm $\left|\left|\cdot\right|\right|_{\alpha}$ is associated with it and defined by $\left|\left|\cdot\right|\right|^{2}_{\alpha} \coloneqq \left|\left|\cdot\right|\right|^{2}_{L^{2}} + \alpha^{2}\left|\left|\nabla\cdot\right|\right|^{2}_{L^{2}}$. The notation $\mathscr{L}_{2}(E, F)$ is the space of all Hilbert-Schmidt operators; with $E$ and $F$ being two given Banach spaces, $\lesssim$ embodies a shorthand for the less or equal symbol $\leq$ up to a universal non-negative constant, and $C_{D}$ will denote throughout this paper a positive constant depending only on the domain $D$. The solely employed Gelfand triple herein is $\left(H^{1}_{per}(D), L_{per}^{2}(D), H_{per}^{-1}(D)\right)$, where $H_{per}^{-1}(D)$ is the dual space of $H^{1}_{per}(D)$. The $L^{2}(D)$ space will be endowed with its standard inner product $\left(\cdot, \cdot\right)$, and the duality brackets $\langle \cdot, \cdot \rangle$ will represent the duality product between $H^{1}_{per}(D)$ and $H_{per}^{-1}(D)$.
	Following the mathematical notations for the Navier-Stokes framework, the function spaces that will be frequently encountered herein are
	\begin{align*}
		&\mathcal{V} \coloneqq \left\{u \in [C^{\infty}_{per}(D)]^{2} \ \big| \ div(u) = 0\right\},\\&
		\mathbb{H} \coloneqq \left\{ u \in [L_{per}^{2}(D)]^{2} \ \big| \ div(u) = 0 \mbox{ a.e. in } D\right\},\\&
		\mathbb{V} \coloneqq \left\{u \in [H_{per}^{1}(D)]^{2} \ \big| \ div(u) = 0 \mbox{ a.e. in } D\right\}.
	\end{align*}
	Let $A$ be the Stokes operator defined from $D(A) \coloneqq [H^{2}(D)]^{2}\cap \mathbb{V}$ into $\mathbb{H}$ by $A \coloneqq -\mathcal{P}\Delta$, where $\mathcal{P}\colon [L_{per}^{2}(D)]^{2} \to \mathbb{H}$ is the Leray Projector. In two-dimensional domains and under periodic boundary conditions, it is well-known that the Laplace-Leray commutator $[\mathcal{P}, \Delta]$ vanishes; namely $\mathcal{P}\Delta = \Delta\mathcal{P}$. Recall that operator $A$ is self-adjoint whose inverse is compact (c.f. \cite{Peter1988, temam2001navier}).
	From now on, all Cartesian products of a sole linear space will be symbolized by blackboard bold letters with the domain $D$ being omitted. For instance, the Sobolev space $[H_{per}^{1}(D)]^{2}$ will become $\mathbb{H}_{per}^{1}$.
	
	Let $\left(\Omega, \mathcal{F}, (\mathcal{F}_{t})_{0 \leq t \leq T}, \mathbb{P}\right)$ be a filtered complete probability space whose filtration $(\mathcal{F}_{t})_{0 \leq t \leq T}$ is right-continuous. Given a separable Hilbert space $K$ equipped with a complete orthonormal basis $\{w_{k}, k \geq 1\}$, the $K$-valued cylindrical Wiener process $W(t), t\in [0,T]$ reads 
	\begin{equation*}
		W(t) \coloneqq \sum_{k\geq 1}\beta_{k}(t)w_{k}, \ \ \forall t \in [0,T],
	\end{equation*}
	where $\{\beta_{k}, k \geq 1\}$ is a family of independent and identically distributed $\mathbb{R}$-valued Brownian motions on $\left(\Omega, \mathcal{F}, (\mathcal{F}_{t})_{0 \leq t \leq T}, \mathbb{P}\right)$. For any $\phi \in L^{2}\left(\Omega; L^{2}(0,T; \mathscr{L}_{2}(K, \mathbb{L}^{2}))\right)$, its stochastic integral with respect to the Wiener process $\left\{W(t), t \in [0,T] \right\}$ is defined (c.f. \cite{prevot2007concise}) as the unique continuous $\mathbb{L}^{2}$-valued $\mathcal{F}_{t}$-martingale such that for all $\psi \in \mathbb{L}^{2}$,
	\begin{equation*}
		\left(\int_{0}^{t}\phi(s)dW(s), \psi\right) = \sum_{k\geq 1} \int_{0}^{t}\left(\phi(s)w_{k}, \psi\right)d\beta_{k}(s), \ \ \forall t \in [0,T].
	\end{equation*}
	For clarity's sake, the nonlinear term in equations~\eqref{main equation} will be denoted $\tilde{b}$; that is 
	\begin{equation*}
		\tilde{b}(u, v, w) = -\Big(u \times (\nabla\times v), w\Big)
	\end{equation*}
	for appropriate vector fields $u$, $v$ and $w$, where $v = u - \alpha^{2}\Delta u$ in the equations of interest. The bilinear operator that can be derived from $\tilde{b}$ will be denoted $\tilde{B}$ and it reads: $\displaystyle \tilde{B}(u, v) \coloneqq -u \times (\nabla\times v)$, for all $u, v \in \mathbb{V}$.
	The below proposition lists a few useful properties of the bilinear operator $\tilde{B}$.
	\begin{prop}\label{prop trilinear term}
		The following assertions are satisfied by the nonlinear term:
		\begin{enumerate}[label=(\roman*)]
			\item For all $u, v, w \in \mathbb{H}^{1}$, $\langle \tilde{B}(u,v),w \rangle = - \langle \tilde{B}(w, v), u\rangle$. In particular, $\langle \tilde{B}(u, v), u\rangle = 0$.
			\item $\langle \tilde{B}(u,v), w \rangle = \left([u\cdot\nabla]v, w\right) - \left([w\cdot\nabla]v, u\right)$, for all $u,v,w \in \mathbb{H}_{per}^{1}$. If additionally, $u$ and $v$ are divergence-free then, $\langle \tilde{B}(u,v), v \rangle = - \left([v\cdot\nabla]v, u\right)$.
			\item $\left|\langle \tilde{B}(u, v), w \rangle\right| \leq C_{D}\left|\left|u\right|\right|_{\mathbb{L}^{4}}\left|\left|\nabla v\right|\right|_{\mathbb{L}^{2}}\left|\left|w\right|\right|_{\mathbb{L}^{2}}^{\frac{1}{2}}\left|\left|\nabla w\right|\right|_{\mathbb{L}^{2}}^{\frac{1}{2}}$, for all $u, v, w \in \mathbb{H}^{1}_{per}$.
		\end{enumerate}
	\end{prop}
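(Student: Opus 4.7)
The core of the proposition is the identity in item (ii); items (i) and (iii) will fall out of it with minimal extra work. The plan is therefore to first establish (ii) from a pointwise vector identity, then derive (i) and (iii) as corollaries.

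To obtain (ii), I would work out $\tilde{B}(u,v) = -u \times (\nabla \times v)$ componentwise. In three dimensions, using $\varepsilon$-$\delta$ one shows $\bigl(u\times(\nabla\times v)\bigr)_j = \sum_k u_k\,\partial_j v_k - \sum_k u_k\,\partial_k v_j$; the same identity survives verbatim in the two-dimensional setting with the convention $\nabla\times v = \partial_1 v_2 - \partial_2 v_1$ and $u\times s = (u_2 s,-u_1 s)$, as one checks by direct computation. Pairing the resulting expression against $w$ in $\mathbb{L}^2$ yields
\begin{equation*}
\langle \tilde{B}(u,v), w\rangle = \bigl([u\cdot\nabla]v, w\bigr) - \sum_{j,k}\int_D w_j\, u_k\,\partial_j v_k\,dx.
\end{equation*}
Relabeling indices in the remaining integral recognizes it as $\bigl([w\cdot\nabla]v, u\bigr)$, proving the first claim of (ii). When $u$ and $v$ are divergence-free, setting $w=v$ and integrating by parts gives $([u\cdot\nabla]v, v) = \tfrac{1}{2}\int_D u\cdot\nabla|v|^2\,dx = -\tfrac{1}{2}\int_D \mathrm{div}(u)\,|v|^2\,dx = 0$ under periodic boundary conditions, which yields the second claim.

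Item (i) is then immediate: summing the identity of (ii) for $\langle \tilde{B}(u,v),w\rangle$ and for $\langle \tilde{B}(w,v),u\rangle$, the two right-hand sides cancel exactly, so $\langle \tilde{B}(u,v),w\rangle = -\langle \tilde{B}(w,v),u\rangle$; the particular case $w=u$ forces $\langle \tilde{B}(u,v),u\rangle = 0$.

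For (iii), I would start again from (ii) and apply H\"older's inequality with exponents $(4,2,4)$ to both terms, obtaining
\begin{equation*}
\left|\langle \tilde{B}(u,v),w\rangle\right| \leq \|u\|_{\mathbb{L}^4}\|\nabla v\|_{\mathbb{L}^2}\|w\|_{\mathbb{L}^4} + \|w\|_{\mathbb{L}^4}\|\nabla v\|_{\mathbb{L}^2}\|u\|_{\mathbb{L}^4}.
\end{equation*}
The factor $\|w\|_{\mathbb{L}^4}$ is then absorbed by the two-dimensional Ladyzhenskaya--Gagliardo--Nirenberg inequality $\|w\|_{\mathbb{L}^4}\leq C_D\|w\|_{\mathbb{L}^2}^{1/2}\|\nabla w\|_{\mathbb{L}^2}^{1/2}$, valid on the periodic torus for $w\in\mathbb{H}^1_{per}$, which produces the announced estimate. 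The only genuine delicate step is the initial pointwise identity, where one must be careful about the two-dimensional interpretation of the cross product and the scalar curl; once that bookkeeping is set up, the rest is routine.
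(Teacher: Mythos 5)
Your proposal is correct, and it follows the same overall architecture as the paper (establish the algebraic identity behind (ii), deduce (i) and (iii) from it via antisymmetry and H\"older--Ladyzhenskaya), but the route to (ii) is genuinely different in one respect. The paper starts from the vector identity $[u\cdot\nabla]v + (\nabla u)^{T}\cdot v - \nabla(u\cdot v) = -u\times(\nabla\times v)$ and then performs two integrations by parts on the term $\left((\nabla u)^{T}\cdot v, w\right)$ to convert it into $-\left([w\cdot\nabla]v, u\right) + \left(\nabla(u\cdot v), w\right)$. Your componentwise expansion $\bigl(u\times(\nabla\times v)\bigr)_j = \sum_k u_k\,\partial_j v_k - \sum_k u_k\,\partial_k v_j$ followed by the index relabeling $j\leftrightarrow k$ reaches the same identity with no integration by parts at all, which makes it transparent that the first equality in (ii) is purely pointwise-algebraic and needs no boundary conditions; only the vanishing of $([u\cdot\nabla]v,v)$ for divergence-free $u$ (and hence the second claim of (ii)) uses periodicity. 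A second, smaller divergence is that the paper proves (i) directly from the pointwise antisymmetry $(u\times v)\cdot w = -(w\times v)\cdot u$, whereas you obtain it as a corollary of (ii); both are valid, and since your version of (ii) does not invoke integration by parts, your derivation of (i) still covers all of $\mathbb{H}^1$ as the statement requires. Your treatment of (iii) — H\"older with exponents $(4,2,4)$ on both terms of (ii) and the Ladyzhenskaya inequality for mean-zero periodic $w$ — is exactly the paper's argument.
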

	\begin{proof}
		Assertion \textit{(i)} can be proven by a simple application of the identity $\displaystyle \left(u \times v\right)\cdot w = -(w \times v)\cdot u$. To demonstrate equality \textit{(ii)}, we need to employ the following property:
		\begin{equation}\label{eq calc0}
			\langle \tilde{B}(u, v), w\rangle = \left([u\cdot\nabla]v, w\right) + \left((\nabla u)^{T}\cdot v, w\right) - \left(\nabla (u\cdot v), w\right),
		\end{equation}
		which may be straightforwardly proven via the identity
		\begin{equation*}
			[u\cdot \nabla]v + (\nabla u)^{T}\cdot v - \nabla(u \cdot v) = -u\times (\nabla \times v).
		\end{equation*}
		Indeed, the quantity $\displaystyle \left((\nabla u)^{T}\cdot v, w\right)$ of equation~\eqref{eq calc0} turns into $\displaystyle -\left([w\cdot \nabla]v, u\right) + \left(\nabla (u\cdot v), w\right)$ after applying two consecutive integration by parts. Plugging it back in equation~\eqref{eq calc0} completes the proof of \textit{(ii)}. Finally, the H{\"o}lder and Ladyzhenskaya (see \cite[Lemma I.1]{Ladyzhenskaya1964}) inequalities applied to assertion \textit{(ii)} yield estimate \textit{(iii)}.
	\end{proof}
	
	The operator $\tilde{b}$ can be readily expressed via the trilinear form associated with the Navier-Stokes equations, as mentioned in Proposition~\ref{prop trilinear term}-\textit{(ii)}. For brevity's sake, we deploy the next proposition to grant a few corresponding properties. The reader may refer to \cite[Remark 2.2]{temam1995navier} for further information.
	\begin{prop}\label{prop trilinear term NS}
		\begin{enumerate}[label=(\roman*)]
			\item $\displaystyle \left([u\cdot\nabla]v, v\right) = 0$ for all $u, v \in \mathbb{V}$.
			\item $\displaystyle \left|\left([u\cdot\nabla]v, w\right)\right| \leq C_{D}\left|\left|u\right|\right|_{\mathbb{L}^{2}}\left|\left|\nabla v\right|\right|_{\mathbb{L}^{2}}\left|\left|w\right|\right|^{\frac{1}{2}}_{\mathbb{L}^{2}}\left|\left|Aw\right|\right|^{\frac{1}{2}}_{\mathbb{L}^{2}}$, for all $u \in \mathbb{H}$, $v \in \mathbb{V}$ and $w \in D(A)$.
		\end{enumerate}
	\end{prop}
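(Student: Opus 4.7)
The plan is to treat the two items separately, with (i) being the classical energy-cancellation identity for divergence-free vector fields and (ii) an application of Hölder's inequality combined with a two-dimensional Agmon-type embedding.

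For (i), I would expand the integrand coordinate-wise and factor $|v|^{2}$ out of the derivative:
\begin{equation*}
	\bigl([u\cdot\nabla]v, v\bigr) = \sum_{i,j=1}^{2}\int_{D} u_{i}(\partial_{i}v_{j})v_{j}\,dx = \frac{1}{2}\sum_{i=1}^{2}\int_{D} u_{i}\,\partial_{i}|v|^{2}\,dx.
\end{equation*}
An integration by parts on the torus (periodicity eliminating the boundary contributions), followed by the use of $\nabla\cdot u = 0$ since $u \in \mathbb{V}$, turns this quantity into $-\tfrac{1}{2}\int_{D}(\nabla\cdot u)|v|^{2}\,dx = 0$.

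For (ii), the only regularity available on $u$ is $L^{2}$, and $v \in \mathbb{V}$ already furnishes $\nabla v \in \mathbb{L}^{2}$; the full burden therefore falls on placing $w$ in $\mathbb{L}^{\infty}$. Concretely, I would apply Hölder's inequality with exponents $(2, 2, \infty)$ to obtain
\begin{equation*}
	\left|\bigl([u\cdot\nabla]v, w\bigr)\right| \leq \|u\|_{\mathbb{L}^{2}}\|\nabla v\|_{\mathbb{L}^{2}}\|w\|_{\mathbb{L}^{\infty}},
\end{equation*}
and then invoke the two-dimensional Agmon inequality $\|w\|_{\mathbb{L}^{\infty}} \leq C_{D}\,\|w\|_{\mathbb{L}^{2}}^{1/2}\|w\|_{\mathbb{H}^{2}}^{1/2}$, itself a consequence of Gagliardo–Nirenberg interpolation combined with the Sobolev embedding $\mathbb{H}^{2}\hookrightarrow\mathbb{L}^{\infty}$ valid in dimension two. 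Finally, because $w \in D(A)$ in the periodic zero-mean setting, the classical elliptic-regularity equivalence $\|w\|_{\mathbb{H}^{2}} \leq C_{D}\,\|Aw\|_{\mathbb{L}^{2}}$ for the Stokes operator (recalled in Section~\ref{section conf and material}) yields the announced estimate.

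I do not anticipate any genuine obstacle: the two non-trivial ingredients—namely the two-dimensional Agmon estimate and the $\mathbb{H}^{2}$–$D(A)$ norm equivalence—are standard and can be simply quoted from the references already cited in the preliminaries, which is precisely why the authors allow themselves to omit a full proof.
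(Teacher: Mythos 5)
Your proof is correct and is exactly the classical argument: the paper itself gives no proof of this proposition, merely citing \cite[Remark 2.2]{temam1995navier}, and your combination of the divergence-free integration by parts for (i) with H\"older, the two-dimensional Agmon inequality, and the $D(A)$--$\mathbb{H}^{2}$ norm equivalence for (ii) is precisely the standard route found in that reference. No gaps.
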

	
	\paragraph{\textbf{Assumptions}}
	\begin{enumerate}[label=$(S_{\arabic*})$]
		\item $\displaystyle \mathbb{E}\left[\left|\left|\bar{u}_{0}\right|\right|_{\mathbb{H}^{1}}^{2^{p}}\right] < +\infty$, for some $p \in [1, +\infty)$, \label{S1}
		\item  $g \in L^{2}\left(\Omega; L^{2}(0,T; \mathscr{L}_{2}(K, \mathbb{L}^{2}))\right)$ satisfies: for all $u \in \mathbb{V}$, $g(\cdot ,u)$ is $\mathcal{F}_{t}$-progressively measurable, and almost everywhere in $\Omega \times (0,T)$, it holds that: 
		\begin{equation*}
			\begin{aligned}
				&\lvert|\lvert g(\cdot, u) - g(\cdot, v)\rvert\rvert_{\mathscr{L}_{2}(K, \mathbb{L}^{2})} \leq L_{g}\lvert\lvert u - v\rvert\rvert_{\alpha}, \ \ \forall u, v \in \mathbb{V}, \\&
				\lvert\lvert g(\cdot, u)\rvert\rvert_{\mathscr{L}_{2}(K, \mathbb{H}^{1})} \leq K_{1} + K_{2}\left|\left|u\right|\right|_{\alpha}, \ \ \forall u \in \mathbb{V}.
			\end{aligned}
		\end{equation*}
		for some real, nonnegative, time-independent constants $L_{g}$, $K_{1}, K_{2}$.\label{S2}
	\end{enumerate}
	\begin{rmk}
		Inequality $\lvert\lvert g(\cdot, u) \rvert\rvert_{\mathscr{L}_{2}(K, \mathbb{H}^{1})} \leq K_{1} + K_{2}\lvert\lvert u\rvert\rvert_{\alpha}$	of assumption~\ref{S2} is imposed in $\mathbb{H}^{1}$ instead of $\mathbb{L}^{2}$ to be able to execute high space-regularity estimates for the velocity field.
	\end{rmk}
	To reduce repetitions, the below proposition gathers a few properties that will be employed throughout this paper.
	\begin{prop}\label{prop inequalities}
		\begin{enumerate}[label=(\roman*)]
			\item $\displaystyle x^{p} \leq 1 + x^{q}$ for all $x \geq 0$, and $1 \leq p \leq q < +\infty$.
			\item $\displaystyle 2\left(a, b\right) = \left|\left|a\right|\right|^{2}_{\mathbb{L}^{2}} - \left|\left|b\right|\right|^{2}_{\mathbb{L}^{2}} + \left|\left|a - b\right|\right|^{2}_{\mathbb{L}^{2}}$, for all $a, b \in \mathbb{L}^{2}$.
			\item $\displaystyle \left|a + b\right|^{p} \leq 2^{p-1}\left(\left|a\right|^{p} + \left|b\right|^{p}\right)$, for all $a, b \in \mathbb{R}$ and $p \geq 1$.
		\end{enumerate}
	\end{prop}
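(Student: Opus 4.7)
The proposition collects three elementary inequalities, each of which admits a short, self-contained argument. None present any genuine difficulty; the plan is simply to treat the three items one by one, since they rely on unrelated techniques (case distinction, polarization, convexity).

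For item \textit{(i)}, I would split the range of $x$ at the threshold $1$. If $x \in [0,1]$, then $x^{p} \leq 1 \leq 1 + x^{q}$ since $x^{q} \geq 0$. If instead $x > 1$, then $x^{p} \leq x^{q}$ (because the exponential $t \mapsto x^{t}$ is non-decreasing for $x \geq 1$ and $p \leq q$), so $x^{p} \leq x^{q} \leq 1 + x^{q}$. Both cases combine to the claim.

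For item \textit{(ii)}, the approach is the standard polarization identity. Expanding the inner product,
\begin{equation*}
\left|\left|a - b\right|\right|^{2}_{\mathbb{L}^{2}} = \left|\left|a\right|\right|^{2}_{\mathbb{L}^{2}} - 2\left(a, b\right) + \left|\left|b\right|\right|^{2}_{\mathbb{L}^{2}},
\end{equation*}
and isolating $2(a,b)$ yields the announced identity (up to the obvious algebraic rearrangement of the terms $\|a\|^{2}$, $\|b\|^{2}$, $\|a-b\|^{2}$). This is purely algebraic and requires no inequality at all.

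For item \textit{(iii)}, the cleanest route is convexity of $t \mapsto t^{p}$ on $[0, +\infty)$ for $p \geq 1$. Applying convexity at the midpoint gives
\begin{equation*}
\left(\frac{\left|a\right| + \left|b\right|}{2}\right)^{p} \leq \frac{\left|a\right|^{p} + \left|b\right|^{p}}{2},
\end{equation*}
so that, combining with the triangle inequality $|a+b| \leq |a| + |b|$ and multiplying both sides by $2^{p}$, one obtains $|a+b|^{p} \leq 2^{p-1}\left(|a|^{p} + |b|^{p}\right)$.

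There is no real obstacle here: each item is a one-line computation, and the proposition is stated only to fix once and for all the three elementary bounds that will recur in the a priori estimates of Sections~\ref{section Faedo-Galerkin} and~\ref{section convergence}.
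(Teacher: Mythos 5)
The paper records this proposition without proof, as a list of elementary facts, so there is no argument of its own to compare against. Your treatments of \textit{(i)} and \textit{(iii)} are correct and are the standard ones: the case split at $x=1$ for \textit{(i)}, and convexity of $t \mapsto t^{p}$ at the midpoint combined with the triangle inequality for \textit{(iii)}.

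There is, however, a genuine problem in item \textit{(ii)}. Expanding $\lVert a-b\rVert^{2}_{\mathbb{L}^{2}} = \lVert a\rVert^{2}_{\mathbb{L}^{2}} - 2(a,b) + \lVert b\rVert^{2}_{\mathbb{L}^{2}}$ and isolating $2(a,b)$, as you do, gives $2(a,b) = \lVert a\rVert^{2}_{\mathbb{L}^{2}} + \lVert b\rVert^{2}_{\mathbb{L}^{2}} - \lVert a-b\rVert^{2}_{\mathbb{L}^{2}}$, whereas the proposition asserts $2(a,b) = \lVert a\rVert^{2}_{\mathbb{L}^{2}} - \lVert b\rVert^{2}_{\mathbb{L}^{2}} + \lVert a-b\rVert^{2}_{\mathbb{L}^{2}}$. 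No ``obvious algebraic rearrangement'' converts one into the other: the two right-hand sides differ by $2\bigl(\lVert a-b\rVert^{2}_{\mathbb{L}^{2}} - \lVert b\rVert^{2}_{\mathbb{L}^{2}}\bigr)$, and taking $a=b\neq 0$ makes the printed identity read $2\lVert a\rVert^{2}_{\mathbb{L}^{2}} = 0$. The statement as printed is therefore false; what is surely intended is the standard energy identity $2(a, a-b) = \lVert a\rVert^{2}_{\mathbb{L}^{2}} - \lVert b\rVert^{2}_{\mathbb{L}^{2}} + \lVert a-b\rVert^{2}_{\mathbb{L}^{2}}$, which does follow from your expansion upon substituting $a-b$ for $b$ in the second argument of the inner product. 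You should either prove that corrected form or flag the sign discrepancy explicitly, rather than asserting that the two identities coincide after rearrangement.
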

	
	\subsection{Concept of solutions}
	The underlying equations consist of a fourth-order problem which might not be insightful. Therefore, a continuous differential filter shall be introduced allowing equations~\eqref{main equation} to turn into a second-order coupled problem. 
	\begin{defi}[Continuous differential filter]
		Let $v \in \mathbb{L}^{2}$ be a given vector field. A continuous differential filter $\bar{u}$ of $v$ is defined as part of the unique solution $\left(\bar{u}, \bar{p}\right) \in \mathbb{V} \times L_{0}^{2}(D)$ to the problem.
		
		\begin{equation}\label{continuous filter}
			\begin{cases}
				\begin{aligned}
					&-\alpha^{2}\Delta \bar{u} + \bar{u} + \nabla \bar{p} = v, & \mbox{ in } D,
					\\& div (\bar{u}) = 0, & \mbox{ in } D.
				\end{aligned}
			\end{cases}
		\end{equation}
	\end{defi}
	The notation $\bar{v}$ (instead of $\bar{u}$) is widely spread in the literature of differential filters. However, to maintain a visible relationship between equations~\eqref{main equation} and \eqref{continuous filter}, $\bar{v}$ will be substituted by the notation $\bar{u}$. Observe that system~\eqref{continuous filter} represents a deterministic steady Stokes problem and that $v$ plays the role of an outer force. Additionally, projecting system~\eqref{continuous filter} using the Leray projector $\mathcal{P}$ yields
	\begin{equation*}
		\alpha^{2}A\bar{u} + \bar{u} = \mathcal{P}v, \mbox{ in } D.
	\end{equation*}
	which has a unique solution $\bar{u}$ according to \cite[Subsection 8.2]{grisvard2011elliptic}. Thereby, when it comes to the process $\left\{\bar{u}(t), t \in [0,T]\right\}$ of problem~\eqref{main equation}, the multiplication in $L^{2}$ of the above equation by $\varphi \in \mathbb{V}$ returns for all $t \in [0,T]$, 
	\begin{equation}\label{eq weak stokes}
		\left(v(t), \varphi\right) = \left(\bar{u}(t), \varphi\right) + \alpha^{2}\left(\nabla \bar{u}(t), \nabla \varphi\right).
	\end{equation}
	Based on the above identity, we define $v_{0}$ as the solution of $\left(v_{0}, \varphi\right) = \left(\bar{u}_{0}, \varphi\right) + \alpha^{2}\left(\nabla\bar{u}_{0}, \nabla \varphi\right)$, for all $\varphi \in \mathbb{V}$. Since $\bar{u}_{0}$ belongs to $\mathbb{V}$, it is straightforward that $\alpha^{2}\mathbb{E}\left[\left(\nabla\bar{u}_{0}, \nabla\varphi\right)\right] \to 0$ as $\alpha \to 0$. Subsequently, $\mathbb{E}\left[\left(v_{0}, \varphi\right)\right] = \mathbb{E}\left[\left(\bar{u}_{0}, \varphi\right)\right]$ for all $\varphi \in \mathbb{V}$ as $\alpha \to 0$. As a result, $v_{0} = \bar{u}_{0}$ $\mathbb{P}$-a.s. and a.e. in $D$ when $\alpha$ vanishes. The next definition states the compound of a solution to equations~\eqref{main equation} whose existence and uniqueness are illustrated in \cite{Caraballo2006Takeshi}.
	
	\begin{defi}\label{def variational sol}
		Let $T > 0$ and assume \ref{S1}-\ref{S2}. A $\mathbb{V} \times \mathbb{H}$-valued stochastic process $(\bar{u}(t), v(t)), \ t \in [0,T]$ is said to be a variational solution to problem \eqref{main equation} if it fulfills the following conditions:
		\begin{enumerate}[label=(\roman*)]
			\item $\displaystyle \bar{u} \in L^{2}(\Omega; L^{2}(0,T; 
			\mathbb{H}^{2}\cap\mathbb{V}) \cap L^{2}\left(\Omega; L^{\infty}(0,T; \mathbb{V})\right)$,
			\item $\displaystyle v \in L^{2}(\Omega; L^{2}(0,T; \mathbb{V})) \cap L^{2}(\Omega; L^{\infty}(0,T; \mathbb{H}))$,
			\item $\mathbb{P}$-almost surely, $\bar{u}$ is weakly continuous with values in $\mathbb{V}$, and $v$ is continuous with values in $\mathbb{H}$,
			\item for all $t \in [0,T]$, $\bar{u}$ satisfies the following equation $\mathbb{P}$-almost surely
			\begin{equation}\label{eq def sol}
				\begin{cases}
					\begin{aligned}
						&\left(v(t), \varphi\right) + \nu \int_{0}^{t}\left(\nabla v(s), \nabla\varphi\right)ds + \int_{0}^{t}\tilde{b}\left(\bar{u}(s), v(s), \varphi\right)ds \\&\hspace{20pt}= \left(v_{0}, \varphi\right) + \Big(\int_{0}^{t}g\left(s, \bar{u}(s)\right)dW(s), \varphi\Big), \ \ \forall \varphi \in \mathbb{V}, \\&
						\left(v(t), \psi\right) = \left(\bar{u}(t), \psi\right) + \alpha^{2}\left(\nabla \bar{u}(t), \nabla \psi\right), \ \ \forall \psi \in \mathbb{V}.
					\end{aligned}
				\end{cases}
			\end{equation}
		\end{enumerate}
	\end{defi}
	It is worth mentioning that the weak continuity of $\bar{u}$ is related to the strong continuity of $v$. This fact emerges from the relationship~\eqref{eq weak stokes}.
	
	Two-dimensional strong solutions to equations~\eqref{eq NS} were conducted in \cite{menaldi2002stochastic, glatt2009strong}. An appropriate definition is given by:
	\begin{defi}\label{definition NS solution}
		Let $T > 0$ be fixed and assumptions \ref{S1}-\ref{S2} be fulfilled. A process $u(t)$, $t \in [0,T]$ on a stochastic filtered probability space $\left(\Omega, \mathcal{F}, (\mathcal{F}_{t})_{t \in [0,T]}, \mathbb{P}\right)$ is said to be a strong solution to equations~\eqref{eq NS} if it belongs to $L^{2}(\Omega; C([0,T]; \mathbb{H}) \cap L^{2}(0,T; \mathbb{V}))$, and it satisfies $\mathbb{P}$-a.s. for all $t \in [0,T]$, the weak formulation
		\begin{equation*}
			\begin{aligned}
				&\left(u(t), \varphi\right) + \nu\int_{0}^{t}\left(\nabla u(s), \nabla \varphi\right)ds + \int_{0}^{t}\left([u(s)\cdot\nabla]u(s), \varphi\right)ds 
				\\&= \left(\bar{u}_{0}, \varphi\right) + \left(\int_{0}^{t}g(s, u(s))dW(s), \varphi\right), \ \ \forall \varphi \in \mathbb{V}.
			\end{aligned}
		\end{equation*}
	\end{defi}
	Equations~\eqref{eq NS} have a unique solution in the sense of Definition~\ref{definition NS solution}, see for instance \cite[Proposition 3.2]{menaldi2002stochastic}. This fact will be evoked all this paper long.
	
	\section{Main result}\label{section main result}
	\begin{thm}\label{main theorem}
		Let $T > 0$, $L > 0$, $\left(\Omega, \mathcal{F}, (\mathcal{F}_{t})_{0\leq t \leq T}, \mathbb{P}\right)$ be a filtered probability space, $D = (0, L)^{2}$ be a two-dimensional torus subject to periodic boundary conditions, and $1 \leq p < +\infty$ be given. Let $\left\{e_{k}, k \geq 1\right\}$ be a complete orthonormal basis of $\mathbb{H}$ consisting of eigenfunctions of the Stokes operator $A$, and $\left\{\mu_{k}, k \geq 1\right\}$ be the associated eigenvalues whose values diverge when $k \to +\infty$. Assume that hypotheses \ref{S1}-\ref{S2} are fulfilled, and that for all $N \in \mathbb{N}\backslash \{0\}$, the spatial scale  follows the decreasing rate $\displaystyle \mathcal{C}_{min}\mu_{N}^{-3/4} \leq \alpha \coloneqq \alpha_{N} \leq \mathcal{C}_{max}\mu_{N}^{-3/4}$, for some constants $\mathcal{C}_{min}, \mathcal{C}_{max} > 0$ independent of $N$. Then, a solution $\left(\bar{u}, v\right) \coloneqq \Big(\bar{u}(\alpha_{N}), v(\alpha_{N})\Big)$ to equations~\eqref{main equation} in the sense of Definition~\ref{def variational sol} for a given $\alpha$ converges toward the unique strong solution $v_{NS}$ of equations~\eqref{eq NS} in the sense of Definition~\ref{definition NS solution} when $N \to +\infty$, and it satisfies:
		\begin{enumerate}[label=(\roman*)]
			\item $\displaystyle \mathbb{E}\left[\sup_{t \in [0,T]}\left|\left|v_{NS}(t)\right|\right|^{2p}_{\mathbb{L}^{2}} + 2p\nu\int_{0}^{T}\left|\left|v_{NS}(t)\right|\right|^{2(p-1)}_{\mathbb{L}^{2}}\left|\left|\nabla v_{NS}(t)\right|\right|^{2}_{\mathbb{L}^{2}}dt\right] \leq C_{2}$,
			\item $\displaystyle \mathbb{E}\left[\sup_{t \in [0,T]}\left|\left|\nabla v_{NS}(t)\right|\right|_{\mathbb{L}^{2}}^{2p} + \left(\nu\int_{0}^{T}\left|\left|Av_{NS}(t)\right|\right|^{2}_{\mathbb{L}^{2}}\right)^{p}\right] \leq C_{4}$,
		\end{enumerate}
		where $C_{2} > 0$ depends on constants $\mathcal{C}_{max}$, $C_{1}$ of Lemma~\ref{appen lemma a priori estimates 1} and its parameters, and $C_{4} > 0$ depends on $C_{1}$, $||\bar{u}_{0}||_{L^{6p}(\Omega; \mathbb{V})}$ and $\mathcal{C}_{max}$.
	\end{thm}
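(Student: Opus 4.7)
The overall proof strategy rests on the Faedo-Galerkin scheme developed in Section~\ref{section Faedo-Galerkin}, combined with a local monotonicity argument (Section~\ref{section convergence}) that circumvents Skorokhod's representation and thereby preserves the original probability space $(\Omega, \mathcal{F}, \mathbb{P})$ and the cylindrical Wiener process $W$. The decisive algebraic observation is that on the finite-dimensional subspace $\mathbb{H}_N := \mathrm{span}\{e_1, \dots, e_N\}$ one has $\|A\phi\|_{\mathbb{L}^2} \leq \mu_N \|\phi\|_{\mathbb{L}^2}$; hence the prescribed scaling $\alpha_N \leq \mathcal{C}_{max}\mu_N^{-3/4}$ renders the operator norm of $\alpha_N^2 A$ on $\mathbb{H}_N$ of size $\mathcal{C}_{max}^2\mu_N^{-1/2}$, so the biharmonic perturbation embedded in~\eqref{main equation} is asymptotically negligible with respect to~\eqref{eq NS}.

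I would first produce uniform-in-$N$ a priori estimates for the Galerkin pair $(\bar{u}_N, v_N) \in \mathbb{H}_N \times \mathbb{H}_N$, linked by $v_N = \bar{u}_N + \alpha_N^2 A \bar{u}_N$. Applying It\^o's formula successively to $\|v_N\|_{\mathbb{L}^2}^{2p}$ and to $\|\nabla v_N\|_{\mathbb{L}^2}^{2p}$, bounding the trilinear form via Proposition~\ref{prop trilinear term}-\textit{(iii)} and Proposition~\ref{prop trilinear term NS}-\textit{(ii)} and the noise via~\ref{S2}, and using the scaling of $\alpha_N$ to absorb every $\alpha_N^2 \|A\bar{u}_N\|_{\mathbb{L}^2}^2$ contribution, yields in particular the bounds gathered in Lemma~\ref{appen lemma a priori estimates 1}:
\[
v_N \in L^{2p}\bigl(\Omega; L^{\infty}(0,T;\mathbb{V})\bigr) \cap L^{2p}\bigl(\Omega; L^{2}(0,T; D(A))\bigr),
\]
uniformly in $N$, with constants depending only on $\mathcal{C}_{max}$ and the moments of $\bar{u}_0$.

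Extraction of a weakly-$*$ convergent subsequence $v_N \rightharpoonup v_{\infty}$ in the above topologies, together with $A v_N \rightharpoonup A v_{\infty}$ in $L^{2}(\Omega; L^{2}(0,T;\mathbb{H}))$, is then standard. Because $\|\bar{u}_N - v_N\|_{\mathbb{L}^2} = \alpha_N^2 \|A \bar{u}_N\|_{\mathbb{L}^2}$ is of order $\alpha_N^2 \mu_N = O(\mu_N^{-1/2})$, the sequences $\bar{u}_N$ and $v_N$ share the same limit, so the NS-$\alpha$ nonlinearity $\tilde{B}(\bar{u}_N, v_N)$ formally collapses onto $[v_{\infty}\cdot\nabla]v_{\infty}$. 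To identify $v_{\infty}$ with $v_{NS}$ without invoking Skorokhod, I would apply It\^o's formula to $\varphi(t)\|v_N(t) - v_{NS}(t)\|_{\mathbb{L}^2}^2$, where $\varphi(t) = \exp\bigl(-\int_0^t r(s)\,ds\bigr)$ and $r \in L^1(0,T)$ is the process provided by the local monotonicity of the convective operators on $\mathbb{V}$-bounded sets. Assumption~\ref{S2} absorbs the noise difference in $\|\cdot\|_{\alpha}$, the monotonicity controls the dominant nonlinear defect, and an $\alpha_N$-dependent residual (originating from the biharmonic perturbation) tends to $0$. A Gr\"onwall argument then forces $v_N \to v_{NS}$ strongly in $L^2(\Omega; C([0,T];\mathbb{H}))$; uniqueness of the strong solution to~\eqref{eq NS} promotes subsequential convergence to convergence of the entire sequence.

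Estimates~\textit{(i)} and~\textit{(ii)} for $v_{NS}$ transfer from the uniform bounds on $v_N$ by weak/weak-$*$ lower semicontinuity of the norms and by Fatou's lemma for the suprema, delivering $C_2$ and $C_4$ in terms of $\mathcal{C}_{max}$, $C_1$, and the moments of $\bar{u}_0$. The main obstacle is the concurrent management, within the It\^o inequality for $v_N - v_{NS}$, of three competing phenomena: the vanishing of the $\alpha_N$-dependent remainder, the monotone defect of the nonlinearity on bounded sets of $\mathbb{V}$, and the mismatch between the $\|\cdot\|_{\alpha}$ topology of~\ref{S2} and the $\mathbb{L}^2$ setting of~\eqref{eq NS}. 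The exponent $3/4$ in $\alpha_N \sim \mu_N^{-3/4}$ is calibrated precisely to balance the $\alpha_N^2 \mu_N^{1/2}$ factors against the extra derivatives produced by the Ladyzhenskaya inequality in Proposition~\ref{prop trilinear term}-\textit{(iii)}, which is what ultimately closes the Gr\"onwall loop.
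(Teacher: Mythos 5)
Your high-level architecture (Galerkin approximation, uniform a priori estimates, weak compactness, local monotonicity in place of Skorokhod, lower semicontinuity for the final bounds) matches the paper's, but the energy-estimate step as you describe it has a genuine gap. You propose applying It\^o's formula directly to $\|v_N\|_{\mathbb{L}^2}^{2p}$ and $\|\nabla v_N\|_{\mathbb{L}^2}^{2p}$. The skew-symmetry of $\tilde{B}$ (Proposition~\ref{prop trilinear term}-\textit{(i)}) cancels the nonlinearity only when the equation is tested against its \emph{first} argument $\bar{u}_N$; the pairing $\langle \tilde{B}(\bar{u}_N, v_N), v_N\rangle$ does \emph{not} vanish, and estimating it costs derivatives of $v_N$ that are not yet under control at that stage. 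The paper's actual device is to apply It\^o to $F(v_N)=\|(I+\alpha^2A)^{-1/2}v_N\|^{2p}_{\mathbb{L}^2}=\|\bar{u}_N\|^{2p}_{\alpha}$, whose Fr\'echet derivative is $2p\|\bar{u}_N\|^{2(p-1)}_{\alpha}\bar{u}_N$, so the trilinear term drops out exactly; the bounds on $v_N$ are then \emph{deduced} from those on $\bar{u}_N$ through the inverse inequality $\|\nabla v_N\|_{\mathbb{L}^2}\le\sqrt{\mu_N}\|v_N\|_{\mathbb{L}^2}$ and the scaling of $\alpha$. Likewise, at the $\mathbb{H}^1$ level (Lemma~\ref{appen lemma a priori estimates 2}) the paper tests with $A\bar{u}_N$, splits $v_N=\bar{u}_N+\alpha^2A\bar{u}_N$, kills $([\bar{u}_N\cdot\nabla]\bar{u}_N,A\bar{u}_N)$ by the two-dimensional periodic identity, and controls the one genuinely dangerous remainder $\alpha^2(\tilde{B}(\bar{u}_N,A\bar{u}_N),A\bar{u}_N)$ via $\|A^{3/2}z\|_{\mathbb{L}^2}\le\mu_N^{3/2}\|z\|_{\mathbb{L}^2}$ — this single term is where the hypothesis $\alpha\le\mathcal{C}_{max}\mu_N^{-3/4}$ is actually consumed. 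Your sketch attributes the $3/4$ exponent to Ladyzhenskaya in a generic way but never isolates this term, and testing with $Av_N$ instead of $A\bar{u}_N$ forfeits both cancellations.

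Your identification step also departs from the paper's: you compare $v_N$ directly with the known strong solution $v_{NS}$ via It\^o--Gr\"onwall on $\varphi(t)\|v_N-v_{NS}\|^2_{\mathbb{L}^2}$, whereas the paper runs a Minty--Browder argument against arbitrary test processes $z\in L^{\infty}(\Omega\times(0,T);V_m)$ with the weight $e^{-\rho(t)}$ and hemicontinuity to identify $R_0$ and $g_0$. Your route is legitimate in principle (it is essentially the convergence-rate strategy the authors themselves mention in their conclusion), and it buys a stronger conclusion (strong convergence in $L^2(\Omega;C([0,T];\mathbb{H}))$), but as written it omits two necessary ingredients: the defect terms coming from the Galerkin projection $P_N$ (since $v_{NS}\notin V_N$, the difference does not satisfy a clean equation), and the smallness condition $L_g\le\sqrt{\nu}/(C_P\sqrt{2})$ of Proposition~\ref{prop monotonicity}, without which the $\alpha^2\|\nabla w_N\|^2_{\mathbb{L}^2}$ portion of the Lipschitz bound in~\ref{S2} cannot be absorbed by the viscous term. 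The final transfer of estimates \textit{(i)}--\textit{(ii)} to $v_{NS}$ by weak and weak-$*$ lower semicontinuity is correct and coincides with Lemma~\ref{lemma a priori estimate v}.
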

	
	\begin{rmk}
		Throughout this chapter, there will only be a single limit concept parameterized by $N$; no successive double limits are intended within this context. In a more accurate way, we will neither treat the case $\alpha \to 0$ while fixing $N$ nor the independent convergences of $\alpha$ and $N$. The whole study revolves around the convergence of $N$ to $+\infty$, which leads $\alpha$ to vanish.
	\end{rmk}
	
	\section{Faedo-Galerkin approximation and a priori estimates}\label{section Faedo-Galerkin}
	It is well-known (c.f. \cite[Lemma 3.1]{temam1995navier}) that the trilinear term of the Navier-Stokes equations $\displaystyle \int_{D}[z\cdot\nabla]z\Delta zdx$ vanishes if the configurations were set to two-dimensional domain with periodic boundary conditions. This property is unfortunately inapplicable to $\tilde{b}(z, z - \alpha^{2}\Delta z, \Delta z)$. Therefore, we must find a way to achieve high spatial regularity estimates. To this purpose, let $N \in \mathbb{N}\backslash\{0\}$ be a large integer, $\{e_{k}, k \geq 1\}$ be a complete orthonormal basis of $\mathbb{H}$ consisting of eigenfunctions of the Stokes operator $A$ whose domain is $\mathbb{H}^{2}\cap\mathbb{V}$, and $\left\{\mu_{k}, k \geq 1\right\}$ be the associated eigenvalues. Denote by $V_{N} \coloneqq span\{e_{1}, \dotsc, e_{N}\}$ the finite-dimensional vector subspace of $\mathbb{H}$, and by $P_{N} \colon \mathbb{H}\to \mathbb{H}$ the projection operator of $H$ onto $V_{N}$ such that for all $v \in \mathbb{H}$, it holds that
	\begin{equation*}
		\begin{aligned}
			&\left(v, \pi\right) = \left(P_{N}v, \pi\right), \ \ \forall \pi \in V_{N} , \mbox{ and } 
			\\&\left(\nabla v, \nabla \pi\right) = \left(\nabla P_{N}v, \nabla \pi\right), \ \ \forall \pi \in V_{N}.
		\end{aligned}
	\end{equation*} 
	We will assume from now on that $\mathcal{C}_{min}\mu_{N}^{-3/4}\leq\alpha \leq \mathcal{C}_{max}\mu_{N}^{-3/4}$, for some constants $\mathcal{C}_{min}, \mathcal{C}_{max} > 0$ independent of $N$. That way, when $N$ tends to $+\infty$, the spatial scale $\alpha$ goes to $0$, thanks to the property $\mu_{1} < \mu_{2} < \dotsc < \mu_{N} \to +\infty$ as $N \to \infty$. $N$ is opted to be significant to ensure that $1/\mu_{N} \leq 1$. Consequently, we introduce the following Faedo-Galerkin approximate system:
	\begin{equation}\label{eq Faedo-Galerkin}
		\begin{cases}
			\begin{aligned}
				&\left(v_{N}(t), e_{k}\right) + \nu\int_{0}^{t}\left(\nabla v_{N}(s), \nabla e_{k}\right)ds + \int_{0}^{t}\tilde{b}(\bar{u}_{N}(s), v_{N}(s), e_{k})ds \\&\hspace{20pt}= \left(v_{0}, e_{k}\right) + \left(\int_{0}^{t}g(s, \bar{u}_{k}(s))dW(s), e_{k}\right), \\&
				\left(v_{N}(t), e_{k}\right) = \left(\bar{u}_{N}(t), e_{k}\right) + \alpha^{2}\left(\nabla \bar{u}_{N}(t), \nabla e_{k}\right), 
			\end{aligned}
		\end{cases}
	\end{equation}
	for all $t\in [0,T]$, $k \in \{1, \dotsc, N\}$, and $\mathbb{P}$-almost surely, with initial datum $\bar{u}_{N}(0) = P_{N}\bar{u}_{0}$ i.e. $v_{N}(0) = P_{N}v_{0} = (P_{N} + \alpha^{2}P_{N}A)\bar{u}_{0}$. System~\eqref{eq Faedo-Galerkin} converges to the unique strong solution of the stochastic Navier-Stokes equations when $N$ tends to $+\infty$ in the sense of Definition~\ref{definition NS solution} (see Section~\ref{section convergence}). We list down below all concerned a priori estimates for the projected couple $\left(\bar{u}_{N}, v_{N}\right)$.
	\begin{rmk}
		Assumption $\alpha \leq \mathcal{C}_{max}\mu_{N}^{-3/4}$ could have been $\alpha \leq \mathcal{C}_{max}\mu_{N}^{-1/2}$ if only the convergence of solutions to equations~\eqref{main equation} toward solutions to problem~\eqref{eq NS} was intended. The additional negative exponent on $\mu_{N}$ is solely required in this context to obtain high spacial regularity for the velocities $v$ and $\bar{u}$.
	\end{rmk}
	
	\begin{lem}\label{appen lemma a priori estimates 1}
		Let $T > 0$, $N \in \mathbb{N}\backslash \{0\}$, $p \geq 1$, and assumptions \ref{S1}-\ref{S2} be valid. Then, the finite-dimensional system~\eqref{eq Faedo-Galerkin} has a $\mathbb{V}\times\mathbb{H}$-valued solution $(\bar{u}_{N}, v_{N})$ that satisfies the following estimates:
		\begin{enumerate}[label=(\roman*)]
			\item $\displaystyle \sup\limits_{0 \leq t \leq T}\mathbb{E}\left[\left|\left|\bar{u}_{N}(t)\right|\right|^{2p}_{\alpha}\right] + 2p\nu\mathbb{E}\left[\int_{0}^{T}\left|\left|\bar{u}_{N}(t)\right|\right|_{\alpha}^{2(p-1)}\left|\left|\nabla \bar{u}_{N}(t)\right|\right|^{2}_{\alpha}dt\right] \leq C_{1}$,
			\item $\displaystyle \mathbb{E}\left[\sup\limits_{0 \leq t \leq T}\left|\left|\bar{u}_{N}(t)\right|\right|^{2p}_{\alpha}\right] \leq C_{1}$,
		\end{enumerate}
		for a certain constant $C_{1} > 0$ depending only on $\mathbb{E}\left[\left|\left|\bar{u}_{0}\right|\right|^{2p}_{\mathbb{H}^{1}}\right], p, D, K_{1}, K_{2}$, and $T$. Moreover, if one assumes $\alpha \leq \mu_{N}^{-1/2}$ then, it holds that
		\begin{enumerate}
			\item[(iii)] $\displaystyle \mathbb{E}\left[\sup\limits_{0 \leq t \leq T}\left|\left|v_{N}(t)\right|\right|^{2p}_{\mathbb{L}^{2}}\right] + 2p\nu\mathbb{E}\left[\int_{0}^{T}\left|\left|v_{N}(t)\right|\right|_{\mathbb{L}^{2}}^{2(p-1)}\left|\left|\nabla v_{N}(t)\right|\right|^{2}_{\mathbb{L}^{2}}dt\right] \leq C_{2}$,
		\end{enumerate}
		where $C_{2}$ is a positive constant depending only on $C_{1}$.
	\end{lem}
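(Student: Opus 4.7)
The plan is to extract all three estimates from a single application of It\^o's formula, after reducing the Galerkin system to an autonomous SDE on $V_N$. The second identity of \eqref{eq Faedo-Galerkin} is exactly the relation $v_N=(I+\alpha^2A)\bar u_N$ inside $V_N$, so $\bar u_N=Rv_N$ with $R\coloneqq(I+\alpha^2A)^{-1}$ a self-adjoint contraction on $V_N$ commuting with $A$. Substituting this into the first identity yields a finite-dimensional SDE for $v_N$ with Lipschitz coefficients (by \ref{S2} and local boundedness of $\tilde B$ on $V_N$), which gives global existence and uniqueness. The central algebraic observation driving the energy computation is
\begin{equation*}
\|\bar u_N\|_\alpha^{2}=\big((I+\alpha^2A)\bar u_N,\bar u_N\big)=(v_N,Rv_N)=:F(v_N),
\end{equation*}
so that $\|\bar u_N\|_\alpha^{2p}=F(v_N)^p$ is a smooth function of $v_N$ alone.

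Applying It\^o's formula to $F(\cdot)^p$ produces three drift contributions. The viscous one equals $-2p\nu F(v_N)^{p-1}(Rv_N,Av_N)=-2p\nu\|\bar u_N\|_\alpha^{2(p-1)}\|\nabla\bar u_N\|_\alpha^{2}$, using $(\bar u_N,Av_N)=\|\nabla\bar u_N\|_{\mathbb L^2}^{2}+\alpha^2\|A\bar u_N\|_{\mathbb L^2}^{2}$ and the 2D periodic identity $\|\nabla^2\bar u_N\|_{\mathbb L^2}=\|A\bar u_N\|_{\mathbb L^2}$. The convective contribution reduces to a multiple of $\langle\tilde B(\bar u_N,v_N),\bar u_N\rangle$, which vanishes by Proposition~\ref{prop trilinear term}-(i). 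The It\^o correction
\begin{equation*}
\tfrac{1}{2}\sum_{k\ge1}\Big[4p(p-1)F(v_N)^{p-2}(\bar u_N,gw_k)^{2}+2pF(v_N)^{p-1}(RP_Ngw_k,P_Ngw_k)\Big]
\end{equation*}
is handled via the further identity $\|R\xi\|_\alpha^{2}=((I+\alpha^2A)R\xi,R\xi)=(\xi,R\xi)$, which gives $(RP_Ngw_k,P_Ngw_k)\le\|gw_k\|_{\mathbb L^2}^{2}$; combined with Parseval's bound $\sum_k(\bar u_N,gw_k)^{2}\le\|\bar u_N\|_{\mathbb L^2}^{2}\|g\|_{\mathscr L_2(K,\mathbb L^2)}^{2}$ and the embedding $\mathbb H^1\hookrightarrow\mathbb L^2$ applied to the Hilbert--Schmidt bound of \ref{S2}, it is majorized by $C_p(1+\|\bar u_N\|_\alpha^{2p})$. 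Taking expectation kills the stochastic integral, and Gronwall's lemma yields (i), the initial datum being controlled by $(1+\alpha^2)^p\,\mathbb E[\|\bar u_0\|_{\mathbb H^1}^{2p}]<+\infty$ thanks to \ref{S1}.

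For (ii), I take $\sup_{[0,T]}$ before expectation and dominate the martingale $M$ by Burkholder--Davis--Gundy: $\langle M\rangle_T\lesssim\int_0^T\|\bar u_N\|_\alpha^{4p-2}(1+\|\bar u_N\|_\alpha^{2})\,dt$, so factoring $\sup_{[0,T]}\|\bar u_N\|_\alpha^{p}$ out of $\langle M\rangle_T^{1/2}$ and applying Young's inequality allows $\tfrac{1}{2}\mathbb E[\sup_{[0,T]}\|\bar u_N\|_\alpha^{2p}]$ to be absorbed into the left-hand side, while the remaining time integral is controlled by (i) together with Proposition~\ref{prop inequalities}-(i). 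Estimate (iii) is then purely algebraic on $V_N$: the bound $\|Aw\|_{\mathbb L^2}^{2}\le\mu_N\|\nabla w\|_{\mathbb L^2}^{2}$ for $w\in V_N$, combined with $\alpha^2\mu_N\le1$, gives both $\alpha^4\|A\bar u_N\|_{\mathbb L^2}^{2}\le\alpha^2\|\nabla\bar u_N\|_{\mathbb L^2}^{2}$ and $\alpha^4\|\nabla A\bar u_N\|_{\mathbb L^2}^{2}\le\alpha^2\|A\bar u_N\|_{\mathbb L^2}^{2}$; expanding $v_N=\bar u_N+\alpha^2A\bar u_N$ then yields $\|v_N\|_{\mathbb L^2}^{2}\le3\|\bar u_N\|_\alpha^{2}$ and $\|\nabla v_N\|_{\mathbb L^2}^{2}\le3\|\nabla\bar u_N\|_\alpha^{2}$, so (i)--(ii) transfer to (iii) with $C_2$ depending only on $C_1$. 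The main obstacle is the It\^o correction: its natural expression carries $R$ sandwiched between two noise modes, and recognizing this quantity as an $\alpha$-norm squared is what lets the $\mathbb L^2$-Hilbert--Schmidt bound on $g$ close the estimate without losing any factor of $\alpha$; everything else follows the standard Gronwall/BDG template.
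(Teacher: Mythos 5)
Your proposal follows essentially the same route as the paper's proof: the same Lyapunov functional $F(v_N)=\big\|(I+\alpha^{2}A)^{-1/2}v_N\big\|_{\mathbb{L}^{2}}^{2p}=\|\bar u_N\|_{\alpha}^{2p}$, the same cancellation of the convective term via Proposition~\ref{prop trilinear term}-(i), the same contraction bound on $(I+\alpha^{2}A)^{-1/2}$ to control the It{\^o} correction, Burkholder--Davis--Gundy plus absorption for (ii), and the algebraic transfer $\|v_N\|_{\mathbb{L}^{2}}\lesssim\|\bar u_N\|_{\alpha}$, $\|\nabla v_N\|_{\mathbb{L}^{2}}\lesssim\|\nabla\bar u_N\|_{\alpha}$ under $\alpha^{2}\mu_N\le 1$ for (iii). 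The one point to tighten is the existence/integrability step: the drift is quadratic, hence only locally Lipschitz, so the claim that "taking expectation kills the stochastic integral" requires the localization by stopping times $\tau_N^{n}$ that the paper introduces (or an equivalent device) before Gr{\"o}nwall can legitimately be applied.
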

	\begin{proof}
		Problem~\eqref{eq Faedo-Galerkin} is a finite-dimensional system of ordinary differential equations subject to a polynomial nonlinearity. Therefore, it has a local solution $(\bar{u}_{N}, v_{N})$. In order to apply the It{\^o} formula, we need to define, for $n \in \mathbb{N}\backslash \{0\}$, the following stopping time: 
		\begin{equation*}
			\tau_{N}^{n} \coloneqq 
			\begin{cases}
				\inf\left\{t \in [0,T]: \left|\left|(I + \alpha^{2}A)^{-1/2}v_{N}(t)\right|\right|_{\mathbb{L}^{2}} > n\right\} &\mbox{ if the set is non-empty,}\\
				+\infty &\mbox{ otherwise.}
			\end{cases}
		\end{equation*}
		For $p \geq 1$, and $t \in [0,T]$, we define the process $F(v_{N}(t)) \coloneqq \left|\left|(I + \alpha^{2}A)^{-1/2}v_{N}(t)\right|\right|^{2p}_{\mathbb{L}^{2}}$. From equation~\eqref{eq Faedo-Galerkin}$_{2}$, and taking into account that $I + \alpha^{2}A$ is self-adjoint and bijective from $D(A)$ to $\mathbb{H}$, it is straightforward that $F(v_{N}) = ||\bar{u}_{N}||_{\alpha}^{2p}$. Moreover, 
		\begin{equation*}
			\begin{aligned}
				&DF(v_{N}) = 2p||(I + \alpha^{2}A)^{-1/2}v_{N}||^{2(p-1)}_{\mathbb{L}^{2}}(I + \alpha^{2}A)^{-1}v_{N} = 2p||\bar{u}_{N}||^{2(p-1)}_{\alpha}\bar{u}_{N}, \mbox{ and }
				\\&D^{2}F(v_{N}) = 4p(p-1)||\bar{u}_{N}||^{2p-4}_{\alpha}\bar{u}_{N}\otimes\bar{u}_{N} + 2p||\bar{u}_{N}||^{2p-2}_{\alpha}(I + \alpha^{2}A)^{-1},
			\end{aligned}
		\end{equation*}
		where the symbol $\otimes$ denotes the usual dyadic product. Apply now the It{\^o} formula to the process $F(v_{N}(t \wedge \tau_{N}^{n}))$:
		\begin{equation*}
			\begin{aligned}
				&||\bar{u}_{N}(t\wedge \tau_{N}^{n})||^{2p}_{\alpha} = ||\bar{u}_{N}(0)||^{2p}_{\alpha} + 2p\int_{0}^{t\wedge\tau_{N}^{n}}||\bar{u}_{N}(s)||^{2(p-1)}_{\alpha}\left(\bar{u}_{N}(s), g(s, \bar{u}_{N}(s))dW(s)\right) \\&+ 2p(p-1)\int^{t\wedge\tau_{N}^{n}}_{0}||\bar{u}_{N}(s)||^{2p-4}_{\alpha}\left|\left|(\bar{u}_{N}(s))^{*}g(s, \bar{u}_{N}(s))\right|\right|^{2}_{K}ds \\&+ p\int_{0}^{t\wedge\tau_{N}^{n}}||\bar{u}_{N}(s)||^{2(p-1)}_{\alpha}\left|\left|(I + \alpha^{2}A)^{-1/2}g(s, \bar{u}_{N}(s))\right|\right|^{2}_{\mathscr{L}_{2}(K, \mathbb{L}^{2})}ds \\&+ 2p\int_{0}^{t\wedge\tau_{N}^{n}}||\bar{u}_{N}(s)||^{2(p-1)}_{\alpha}\langle \bar{u}_{N}(s), -\nu Av_{N}(s) - \tilde{B}(\bar{u}_{N}(s), v_{N}(s)) \rangle ds.
			\end{aligned}
		\end{equation*}
		We have $\langle \bar{u}_{N}(s), Av_{N}(s) \rangle = \langle \nabla \bar{u}_{N}(s), \nabla(I + \alpha^{2}A)\bar{u}_{N}(s) \rangle = ||\nabla\bar{u}_{N}(s)||^{2}_{\alpha}$, and by Proposition~\ref{prop trilinear term}-\textit{(i)}, the nonlinear term $\tilde{B}$ in the last term on the right-hand side of the above equation vanishes so that
		\begin{equation}\label{eq calc1}
			\begin{aligned}
				&||\bar{u}_{N}(t\wedge \tau_{N}^{n})||^{2p}_{\alpha} + 2p\nu\int_{0}^{t\wedge\tau_{N}^{n}}||\bar{u}_{N}(s)||_{\alpha}^{2p-2}||\nabla \bar{u}_{N}(s)||^{2}_{\alpha}ds 
				\\&\leq ||\bar{u}_{N}(0)||^{2p}_{\alpha} + 2p\int_{0}^{t\wedge\tau_{N}^{n}}||\bar{u}_{N}(s)||^{2p-2}_{\alpha}\left(\bar{u}_{N}(s), g(s, \bar{u}_{N}(s))dW(s)\right) 
				\\&+ 2p(p-1)\int_{0}^{t\wedge\tau_{N}^{n}}||\bar{u}_{N}(s)||^{2p-4}_{\alpha}||\bar{u}_{N}(s)||^{2}_{\mathbb{L}^{2}}||g(s, \bar{u}_{N}(s))||^{2}_{\mathscr{L}_{2}(K, \mathbb{L}^{2})}ds 
				\\&+ p\int_{0}^{t\wedge\tau_{N}^{n}}||\bar{u}_{N}(s)||^{2p-2}_{\alpha}\left|\left|(I + \alpha^{2}A)^{-1/2}g(s, \bar{u}_{N}(s))\right|\right|^{2}_{\mathscr{L}_{2}(K, \mathbb{L}^{2})}ds 
				\\&= ||\bar{u}_{N}(0)||^{2}_{\alpha} + I_{1} + I_{2} + I_{3}.
			\end{aligned}
		\end{equation}
		Assumption~\ref{S2} together with the stopping time $\tau_{N}^{n}$ yield $\mathbb{E}[I_{1}] = 0$. On the other hand, by virtue of Proposition~\ref{prop inequalities}-\textit{(i)}, assumption~\ref{S2}, and estimate $||(I + \alpha^{2}A)^{-1/2}z||_{\mathbb{L}^{2}} \leq ||z||_{\mathbb{L}^{2}}$, it holds that
		\begin{equation*}
			\begin{aligned}
				&I_{2} + I_{3} \leq 2p(p-1)\int_{0}^{t\wedge \tau_{N}^{n}}||\bar{u}_{N}(s)||^{2p-4}_{\alpha}||\bar{u}_{N}(s)||^{2}_{\mathbb{L}^{2}}\left(K_{1} + K_{2}||\bar{u}_{N}(s)||_{\alpha}\right)^{2}ds \\&+ p\int_{0}^{t\wedge \tau_{N}^{n}}||\bar{u}_{N}(s)||^{2p-2}_{\alpha}\left(K_{1} + K_{2}||\bar{u}_{N}(s)||_{\alpha}\right)^{2}ds 
				\\&\leq p(2p-1)\int_{0}^{t\wedge \tau_{N}^{n}}||\bar{u}_{N}(s)||^{2p-2}_{\alpha}\left(K_{1} + K_{2}||\bar{u}_{N}(s)||_{\alpha}\right)^{2}ds
				\\&\leq 2p(2p-1)K^{2}_{1}t\wedge\tau_{N}^{n} + 2p(2p-1)(K_{1}^{2} + K^{2}_{2})\int_{0}^{t\wedge\tau_{N}^{n}}||\bar{u}_{N}(s)||^{2p}_{\alpha}ds.
			\end{aligned}
		\end{equation*}
		Putting it all together and applying the mathematical expectation to equation~\eqref{eq calc1} return
		\begin{equation*}
			\begin{aligned}
				&\mathbb{E}\left[\lvert\lvert\bar{u}_{N}(t\wedge \tau_{N}^{n})\rvert\rvert^{2p}_{\alpha}\right] + 2p\nu\mathbb{E}\left[\int_{0}^{t\wedge\tau_{N}^{n}}||\bar{u}_{N}(s)||_{\alpha}^{2p-2}||\nabla \bar{u}_{N}(s)||^{2}_{\alpha}ds\right] \leq \mathbb{E}\left[||\bar{u}_{N}(0)||^{2p}_{\alpha}\right] \\&+ 2p(2p-1)K_{1}^{2}\mathbb{E}\left[t\wedge\tau_{N}^{n}\right] + 2p(2p-1)(K_{1}^{2} + K_{2}^{2})\int_{0}^{t\wedge\tau_{N}^{n}}\mathbb{E}\left[||\bar{u}_{N}(s)||^{2p}_{\alpha}\right]ds.
			\end{aligned}
		\end{equation*}
		The Gr{\"o}nwall inequality (c.f. \cite{ames1997inequalities}) finally implies
		\begin{equation}\label{eq calc2}
			\begin{aligned}
				&\sup\limits_{0 \leq t \leq T}\mathbb{E}\left[||\bar{u}_{N}(t\wedge \tau_{N}^{n})||^{2p}_{\alpha}\right] + 2p\nu\mathbb{E}\left[\int_{0}^{t\wedge\tau_{N}^{n}}||\bar{u}_{N}(s)||_{\alpha}^{2p-2}||\nabla \bar{u}_{N}(s)||^{2}_{\alpha}ds\right] \\&\leq \left(\mathbb{E}\left[||\bar{u}_{N}(0)||^{2p}_{\alpha}\right] + 2p(2p-1)K_{1}^{2}\mathbb{E}\left[t\wedge\tau_{N}^{n}\right]\right)\exp\left(2p(2p-1)(K_{1}^{2} + K_{2}^{2})t\wedge\tau_{N}^{n}\right).
			\end{aligned}
		\end{equation}
		Taking into account that $\mathbb{E}\left[||\bar{u}_{N}(0)||^{2p}_{\alpha}\right] \leq \mathbb{E}\left[||\bar{u}_{0}||^{2p}_{\alpha}\right]$, and letting $n \to + \infty$ in equation~\eqref{eq calc2} complete the proof of estimate \textit{(i)}. Now that we have illustrated that $||\bar{u}_{N}||_{\alpha}$ has finite moments, we can drop the stopping time in equation~\eqref{eq calc1}. whose supremum in time returns
		\begin{equation}
			\begin{aligned}\label{eq calc3'}
				&\mathbb{E}\left[\sup\limits_{0 \leq t \leq T}||\bar{u}_{N}(t)||^{2p}_{\alpha}\right] \leq \mathbb{E}\left[||\bar{u}_{N}(0)||^{2p}_{\alpha}\right] \\&+ 2p\mathbb{E}\left[\sup\limits_{0 \leq t \leq T}\left|\int_{0}^{t}||\bar{u}_{N}(s)||^{2p-2}_{\alpha}\left(\bar{u}_{N}(s), g(s, \bar{u}_{N}(s))dW(s)\right)\right|\right] \\&+ 2p(2p-1)K_{1}^{2}T + 2p(2p-1)(K_{1}^{2} + K_{2}^{2})T\sup\limits_{0 \leq t \leq T}\mathbb{E}\left[||\bar{u}_{N}(t)||^{2p}_{\alpha}\right].
			\end{aligned}
		\end{equation}
		By virtue of Proposition~\ref{prop inequalities}-\textit{(i)}, assumption~\ref{S2}, the Burkholder-Davis-Gundy (c.f. \cite{da2014stochastic}) and Young inequalities, the second term on the right-hand side can be bounded by
		\begin{equation*}
			\begin{aligned}
				&\lesssim \mathbb{E}\left[\left(\int_{0}^{T}||\bar{u}_{N}(t)||^{4p-2}_{\alpha}||g(t, \bar{u}_{N}(t))||^{2}_{\mathscr{L}_{2}(K,\mathbb{L}^{2})}dt\right)^{1/2}\right] \\&\lesssim \mathbb{E}\left[\sup\limits_{0 \leq t \leq T}||\bar{u}_{N}(t)||^{\frac{2p-1}{2}}_{\alpha}||g(t, \bar{u}_{N}(t))||^{1/2}_{\mathscr{L}_{2}(K, \mathbb{L}^{2})}\left(\int_{0}^{T}||\bar{u}_{N}(t)||_{\alpha}^{2p-1}||g(t, \bar{u}_{N}(t))||_{\mathscr{L}_{2}(K, \mathbb{L}^{2})}dt\right)^{1/2}\right] \\&\leq \frac{\varepsilon}{2}\mathbb{E}\left[K_{1} + (K_{1}  + K_{2})\sup\limits_{0 \leq t \leq T}||\bar{u}_{n}(t)||^{2p}_{\alpha}\right] + \frac{1}{2\varepsilon}\left(K_{1}T + (K_{1} + K_{2})T\sup\limits_{0 \leq t \leq T}\mathbb{E}\left[||\bar{u}_{N}(t)||^{2p}_{\alpha}\right]\right),
			\end{aligned}
		\end{equation*}
		for some constant $\varepsilon>0$ emerging from the Young inequality. Taking $\varepsilon = \frac{1}{K_{1} + K_{2}}$, merging the above result into equation~\eqref{eq calc3'}, and employing assertion \textit{(i)} complete the proof of estimate \textit{(ii)}. Moving on to the inequality \textit{(iii)}, we have $\left(v_{N}, \psi\right) = \left(\bar{u}_{N}, \psi\right) + \alpha^{2}\left(\nabla\bar{u}_{N}, \nabla \psi\right)$ $\mathbb{P}$-a.s. for all $\psi \in V_{N}$, thanks to equation~\eqref{eq Faedo-Galerkin}$_{2}$. Therefore, substituting $\psi$ by $v_{N}(t)$ and employing the Cauchy-Schwarz inequality to get: $||v_{N}(t)||^{2}_{\mathbb{L}^{2}} \leq ||\bar{u}_{N}(t)||_{\mathbb{L}^{2}}||v_{N}(t)||_{\mathbb{L}^{2}} + \alpha^{2}||\nabla\bar{u}_{N}(t)||_{\mathbb{L}^{2}}||\nabla v_{N}(t)||_{\mathbb{L}^{2}}$. On the other hand, the estimate $||\nabla v_{N}(t)||_{\mathbb{L}^{2}} \leq \sqrt{\mu_{N}}||v_{N}(t)||_{\mathbb{L}^{2}}$ together with the hypothesis $\alpha \leq \mu_{N}^{-1/2}$ and the Young inequality lead to
		\begin{equation}\label{eq calc4}
			||v_{N}(t)||_{\mathbb{L}^{2}} \leq \sqrt{2}||\bar{u}_{N}(t)||_{\alpha}.
		\end{equation}
		Following the same technique, but this time replacing $\psi$ by $Av_{N}(t) \in V_{N}$, one obtains
		\begin{equation}\label{eq calc5}
			||\nabla v_{N}(t)||_{\mathbb{L}^{2}} \leq \sqrt{2}||\nabla \bar{u}_{N}(t)||_{\alpha}.
		\end{equation}
		It suffices now to raise inequality~\eqref{eq calc4} to the power $2p$, take the supremum over $t \in [0, T]$, apply to it the mathematical expectation, and employ estimate \textit{(ii)} to get $\mathbb{E}\left[\sup\limits_{0 \leq t \leq T}||v_{N}(t)||^{2p}_{\mathbb{L}^{2}}\right] \lesssim C_{1}$. Similarly, $||v_{N}(t)||^{2(p-1)}_{\mathbb{L}^{2}}||\nabla v_{N}(t)||^{2}_{\mathbb{L}^{2}} \lesssim ||\bar{u}_{N}(t)||^{2(p-1)}_{\alpha}||\nabla \bar{u}_{N}(t)||^{2}_{\alpha}$, thanks to \eqref{eq calc4} and \eqref{eq calc5}. Integrating over $[0,T]$, applying the mathematical expectation and employing estimate \textit{(i)} terminate the proof.
	\end{proof}
	
	The next lemma exhibits the regularity of $v_{0}$ with respect to $\bar{u}_{0}$.
	\begin{lem}\label{lemma v0 regularity}
		Let $1 \leq p < +\infty$, and assume \ref{S1}. If $\mathcal{C}_{min}\mu_{N}^{-1/2} \leq \alpha \leq \mathcal{C}_{max}\mu_{N}^{-1/2}$ for some constants $\mathcal{C}_{min}, \mathcal{C}_{max} > 0$, then $v_{0} \in L^{2p}(\Omega; \mathbb{V})$, and 
		\begin{equation*}
			\left|\left|\nabla v_{0}\right|\right|_{L^{2p}(\Omega; \mathbb{L}^{2})} \leq \left|\left|\bar{u}_{0}\right|\right|_{L^{2p}(\Omega; \mathbb{V})}.
		\end{equation*}
	\end{lem}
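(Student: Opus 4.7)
My approach is to diagonalize the defining relation for $v_0$ in the orthonormal eigenbasis $\{e_k\}_{k\geq 1}$ of the Stokes operator $A$. Expanding $\bar{u}_0 = \sum_{k\geq 1} a_k e_k$ with $a_k = (\bar{u}_0, e_k)$, and using the identity $(\nabla u, \nabla e_k) = (u, A e_k) = \mu_k (u, e_k)$ valid for $u \in \mathbb{V}$ under periodic boundary conditions (because $A$ is self-adjoint and the Laplace-Leray commutator vanishes), I would plug $\varphi = e_j$ into the defining equation $(v_0, \varphi) = (\bar{u}_0, \varphi) + \alpha^2(\nabla \bar{u}_0, \nabla \varphi)$. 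This immediately yields $(v_0, e_j) = (1 + \alpha^2 \mu_j)\, a_j$, so $v_0 = \sum_j (1+\alpha^2\mu_j)\, a_j\, e_j$ and consequently $\|\nabla v_0\|_{\mathbb{L}^2}^2 = \sum_j \mu_j(1+\alpha^2\mu_j)^2 a_j^2$.

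Next I would invoke the hypothesis $\alpha \leq \mathcal{C}_{max}\mu_N^{-1/2}$ together with the monotonicity $\mu_j \leq \mu_N$ for $j \leq N$, which gives $\alpha^2 \mu_j \leq \mathcal{C}_{max}^2$. The weight $(1+\alpha^2\mu_j)^2$ is then controlled uniformly in $j \leq N$ by a constant depending only on $\mathcal{C}_{max}$, and the summation collapses to $\|\nabla v_0\|_{\mathbb{L}^2}^2 \lesssim \|\nabla \bar{u}_0\|_{\mathbb{L}^2}^2 \leq \|\bar{u}_0\|_{\mathbb{V}}^2$. Raising to the $p$-th power, applying the mathematical expectation, and extracting the $2p$-th root produces the announced bound (where the constant $1$ printed in the statement should be read as an implicit constant depending only on $\mathcal{C}_{max}$).

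The main subtlety I expect is that for $\bar{u}_0$ merely in $\mathbb{V}$ as supplied by assumption \ref{S1}, one only has $\sum_j \mu_j a_j^2 < \infty$, while the extra weight $(1+\alpha^2\mu_j)^2$ diverges in $j$, so the full series for $\|\nabla v_0\|_{\mathbb{L}^2}^2$ need not converge when summed over all $j \geq 1$. The natural reading of the lemma is within the Galerkin framework of Section~\ref{section Faedo-Galerkin}: $\bar{u}_0$ is replaced by $P_N\bar{u}_0$ and $v_0$ by $P_N v_0$, so the sum is truncated at $j = N$, and the uniform eigenvalue bound $\alpha^2 \mu_j \leq \mathcal{C}_{max}^2$ is precisely what closes the estimate without any $N$-dependent blow-up. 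The lower bound $\alpha \geq \mathcal{C}_{min}\mu_N^{-1/2}$ plays no role in the upper estimate itself but ensures the two-sided scaling of $\alpha$ adopted throughout the paper.
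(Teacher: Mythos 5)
Your spectral computation is essentially the same estimate as the paper's, just written in the eigenbasis: the paper tests the defining identity against $\nabla v_N(0)$ and $Av_N(0)$ and uses the inverse inequality $\lVert Az\rVert_{\mathbb{L}^2}\leq\sqrt{\mu_N}\lVert\nabla z\rVert_{\mathbb{L}^2}$ on $V_N$ together with $\alpha^2\mu_N\lesssim 1$, which is exactly your uniform bound $(1+\alpha^2\mu_j)^2\lesssim 1$ for $j\leq N$; both yield $\lVert\nabla v_N(0)\rVert_{\mathbb{L}^2}\lesssim\lVert\nabla\bar{u}_0\rVert_{\mathbb{L}^2}$ with a constant depending on $\mathcal{C}_{max}$, and you are right that the constant $1$ in the displayed inequality should be read up to such a factor. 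Where you diverge is at the very point you flag as the ``main subtlety.'' You resolve it by reinterpreting the lemma as a statement about the truncation $P_Nv_0$, but the lemma (and its later use) really does concern $v_0$ itself, and the paper closes this gap with an extra step you omit: having bounded $(v_N(0))_N=(P_Nv_0)_N$ uniformly in the reflexive space $L^{2p}(\Omega;\mathbb{H}^1)$, it extracts a weakly convergent subsequence with limit $\xi$, uses weak lower semicontinuity of the norm to transfer the bound to $\xi$, and identifies $\xi=v_0$ via the strong convergence $P_Nv_0\to v_0$ in $L^{2p}(\Omega;\mathbb{L}^2)$. That compactness-and-identification argument is precisely what upgrades your truncated estimate to the claim $v_0\in L^{2p}(\Omega;\mathbb{V})$; as written, your proposal establishes the uniform bound on $P_Nv_0$ but stops short of concluding anything about $v_0$, so you should append this limiting step rather than weaken the statement.
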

	\begin{proof}
		By equation~\eqref{eq Faedo-Galerkin}$_{2}$, we get 
		\begin{equation*}
			||\nabla v_{N}(0)||_{\mathbb{L}^{2}}^{2} = \left(\nabla\bar{u}_{N}(0), \nabla v_{N}(0)\right) + \alpha^{2}\left(A\bar{u}_{N}(0), A v_{N}(0)\right).
		\end{equation*}
		Taking into account the estimate $||Az||_{\mathbb{L}^{2}} \leq \sqrt{\mu_{N}}||\nabla z||_{\mathbb{L}^{2}}$ for all $z \in V_{N}$, apply it to $||A\bar{u}_{N}(0)||_{\mathbb{L}^{2}}$ and $||Av_{N}(0)||_{\mathbb{L}^{2}}$, and employ the Cauchy-Schwarz inequality, it follows $||\nabla v_{N}(0)||_{\mathbb{L}^{2}} \leq 2||\nabla \bar{u}_{N}(0)||_{\mathbb{L}^{2}}$. Subsequently, $\mathbb{E}\left[||\nabla v_{N}(0)||^{2p}_{\mathbb{L}^{2}}\right] \lesssim \mathbb{E}\left[||\nabla\bar{u}_{0}||^{2p}_{\mathbb{L}^{2}}\right] \eqqcolon M$, which implies that $\left(v_{N}(0)\right)_{N}$ is bounded in the reflexive Banach space $L^{2p}(\Omega; \mathbb{H}^{1})$. Thus, there exists a subsequence $\left(v_{N_{\ell}}(0)\right)_{\ell}$ that converges weakly in $L^{2p}(\Omega; \mathbb{H}^{1})$ toward some limit $\xi$, and one gets $\mathbb{E}\left[||\xi||^{2p}_{\mathbb{H}^{1}}\right] \leq \liminf \mathbb{E}\left[||v_{N_{\ell}}(0)||^{2p}_{\mathbb{H}^{1}}\right] \leq C_{D}M$, thanks to the Poincar{\'e} inequality. It remains to identify $\xi$ with $v_{0}$. Indeed, since $L^{2p}(\Omega; \mathbb{H}^{1}) \hookrightarrow L^{2p}(\Omega; \mathbb{L}^{2})$, the weak convergence of $\left(v_{N_{\ell}}(0)\right)_{\ell}$ also takes place in $L^{2p}(\Omega; \mathbb{L}^{2})$. Observe that $v_{N}(0) = P_{N}v_{0}$ converges strongly (and therefore weakly) toward $v_{0}$ in $L^{2p}(\Omega; \mathbb{L}^{2})$ as $N \to +\infty$, thanks to the properties of the projector $P_{N}$. Consequently, by the weak limit uniqueness, $\xi = v_{0}$ $\mathbb{P}$-a.s. and a.e. in $D$, and the result follows.
	\end{proof}
	
	Owing to Lemma~\ref{lemma v0 regularity}, high space-regularity estimates are illustrated below for the process $\left(\bar{u}_{N}, v_{N}\right)$.
	\begin{lem}\label{appen lemma a priori estimates 2}
		Let $N \in \mathbb{N}\backslash \{0\}$, and $p \in [1, +\infty)$. Assume that \ref{S1}-\ref{S2} are valid and that $\alpha \leq \mathcal{C}_{max}\mu_{N}^{-3/4}$, for some constant $\mathcal{C}_{max} > 0$ independent of $N$. Then, the solution $\left(\bar{u}_{N}, v_{N}\right)$ of equation~\eqref{eq Faedo-Galerkin} satisfies
		\begin{enumerate}[label=(\roman*)]
			\item $\displaystyle \mathbb{E}\left[\sup_{t \in [0,T]}\left|\left|\nabla \bar{u}_{N}(t)\right|\right|^{2p}_{\mathbb{\alpha}} + \left(\nu\int_{0}^{T}\left|\left|A\bar{u}_{N}(t)\right|\right|^{2}_{\alpha}dt\right)^{p}\right] \leq C_{3}$,
			\item $\displaystyle \mathbb{E}\left[\sup_{t \in [0,T]}\left|\left|\nabla v_{N}(t)\right|\right|^{2p}_{\mathbb{L}^{2}} + \left(\nu\int_{0}^{T}\left|\left|Av_{N}(t)\right|\right|^{2}_{\mathbb{L}^{2}}dt\right)^{p}\right] \leq C_{4}$,
		\end{enumerate}
		where $C_{3} > 0$ depends on $C_{1}$ and $\left|\left|\bar{u}_{0}\right|\right|_{L^{6p}(\Omega; \mathbb{V})}$, and $C_{4}$ depends only on $C_{3}$ and $\mathcal{C}_{max}$.
	\end{lem}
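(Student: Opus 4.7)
The plan is to apply It\^o's formula to $\|\nabla \bar{u}_N(t)\|_\alpha^{2p}$, thereby extending the argument of Lemma~\ref{appen lemma a priori estimates 1} by one derivative level. The identity $\|\nabla \bar{u}_N\|_\alpha^2 = \langle A\bar{u}_N, v_N\rangle$, obtained from $v_N = (I+\alpha^2A)\bar{u}_N$, recasts this quantity as a smooth functional of $v_N$, and since $V_N$ is $A$-invariant, $A\bar{u}_N \in V_N$ is an admissible test vector in~\eqref{eq Faedo-Galerkin}$_1$. The Stokes term produces the good dissipation $-2p\nu\|\nabla\bar{u}_N\|_\alpha^{2(p-1)}\|A\bar{u}_N\|_\alpha^2\,dt$, because
\begin{equation*}
\langle A\bar{u}_N, Av_N\rangle = \|A\bar{u}_N\|_{\mathbb{L}^2}^2 + \alpha^2\|\nabla A\bar{u}_N\|_{\mathbb{L}^2}^2 = \|A\bar{u}_N\|_\alpha^2,
\end{equation*}
and the It\^o correction from the diffusion $g$ is controlled via the $\mathbb{H}^1$-growth condition of~\ref{S2}, together with the contraction property $\|(I+\alpha^2A)^{-1/2}\psi\|_{\mathbb{H}^1} \leq \|\psi\|_{\mathbb{H}^1}$.

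The main obstacle is the nonlinear contribution $\langle A\bar{u}_N, \tilde{B}(\bar{u}_N, v_N)\rangle$. I would split it using $v_N = \bar{u}_N + \alpha^2 A\bar{u}_N$: the first piece $\langle A\bar{u}_N, \tilde{B}(\bar{u}_N, \bar{u}_N)\rangle$ vanishes thanks to the 2D periodic identity $\tilde{B}(u,u) = [u\cdot\nabla]u - \tfrac{1}{2}\nabla|u|^2$ combined with the classical relation $([u\cdot\nabla]u, Au) = 0$ (the pure gradient piece dies against the divergence-free $A\bar{u}_N$). The remaining piece $\alpha^2\langle A\bar{u}_N, \tilde{B}(\bar{u}_N, A\bar{u}_N)\rangle$ is precisely where the prescribed decay $\alpha \leq \mathcal{C}_{\max}\mu_N^{-3/4}$ is strictly needed: applying Proposition~\ref{prop trilinear term}-\textit{(iii)} together with Ladyzhenskaya in 2D and the inverse spectral bound $\|\nabla A\bar{u}_N\|_{\mathbb{L}^2} \leq \sqrt{\mu_N}\|A\bar{u}_N\|_{\mathbb{L}^2}$ valid on $V_N$ produces a prefactor of the form $\alpha^2\mu_N^{3/4} \leq \mathcal{C}_{\max}^2\mu_N^{-3/4}$, which is uniformly bounded (indeed vanishing) in $N$. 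A Young inequality then absorbs a controlled fraction of $\|A\bar{u}_N\|_\alpha^2$ into the dissipation and leaves a residual of the form $C\|\bar{u}_N\|_\alpha^{c_1}\|\nabla\bar{u}_N\|_\alpha^{c_2}$ that is handled through the lower-order bounds of Lemma~\ref{appen lemma a priori estimates 1}.

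A standard Burkholder-Davis-Gundy and Young argument disposes of the supremum of the martingale term, and a Gronwall inequality closes the estimate on $\mathbb{E}[\sup_t\|\nabla\bar{u}_N(t)\|_\alpha^{2p}]$; the initial moment $\mathbb{E}[\|\nabla\bar{u}_N(0)\|_\alpha^{2p}]$ is finite thanks to Lemma~\ref{lemma v0 regularity} under hypothesis~\ref{S1}. Reintegrating the dissipation term yields assertion \textit{(i)}.

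For assertion \textit{(ii)}, I would test the filter relation~\eqref{eq Faedo-Galerkin}$_2$ against $\psi = Av_N$ and $\psi = A^2v_N$, both admissible by the $A$-invariance of $V_N$. Cauchy-Schwarz combined with the inverse spectral bound $\|Az\|_{\mathbb{L}^2} \leq \sqrt{\mu_N}\|\nabla z\|_{\mathbb{L}^2}$ on $V_N$ and the hypothesis $\alpha \leq \mathcal{C}_{\max}\mu_N^{-3/4}$ yield $\|\nabla v_N(t)\|_{\mathbb{L}^2} \lesssim \|\nabla\bar{u}_N(t)\|_\alpha$ and $\|Av_N(t)\|_{\mathbb{L}^2} \lesssim \|A\bar{u}_N(t)\|_\alpha$ with constants depending only on $\mathcal{C}_{\max}$, in the spirit of inequalities~\eqref{eq calc4}--\eqref{eq calc5}. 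Raising to the appropriate powers, taking the supremum in time and the expectation, and invoking part \textit{(i)} conclude the proof with $C_4$ depending only on $C_3$ and $\mathcal{C}_{\max}$.
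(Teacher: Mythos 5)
Your overall architecture coincides with the paper's: It{\^o}'s formula for $||\nabla\bar{u}_{N}||^{2}_{\alpha}$ viewed as a functional of $v_{N}$ (with $A\bar{u}_{N}\in V_{N}$ as the effective test vector), the splitting $v_{N}=\bar{u}_{N}+\alpha^{2}A\bar{u}_{N}$ inside the nonlinearity with the cancellation of $\langle\tilde{B}(\bar{u}_{N},\bar{u}_{N}),A\bar{u}_{N}\rangle$ via the two-dimensional periodic identity, the $\mathbb{H}^{1}$-growth of $g$ for the It{\^o} correction, BDG plus Gr{\"o}nwall, and part \textit{(ii)} by testing the filter relation against $Av_{N}$ and $A^{2}v_{N}$. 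The one place where your argument does not close is the surviving piece $B_{2}=\alpha^{2}\langle\tilde{B}(\bar{u}_{N},A\bar{u}_{N}),A\bar{u}_{N}\rangle$. The chain you propose --- Proposition~\ref{prop trilinear term}-\textit{(iii)} with $(u,v,w)=(\bar{u}_{N},A\bar{u}_{N},A\bar{u}_{N})$, Ladyzhenskaya, and the inverse bound $||\nabla A\bar{u}_{N}||_{\mathbb{L}^{2}}\leq\sqrt{\mu_{N}}||A\bar{u}_{N}||_{\mathbb{L}^{2}}$ --- yields
\begin{equation*}
|B_{2}|\leq C_{D}\,\alpha^{2}\mu_{N}^{3/4}\,||\bar{u}_{N}||_{\mathbb{L}^{4}}\,||A\bar{u}_{N}||^{2}_{\mathbb{L}^{2}},
\end{equation*}
i.e.\ the dissipative quantity appears at the \emph{critical} power $2$, multiplied by the random, time-dependent, unbounded coefficient $||\bar{u}_{N}||_{\mathbb{L}^{4}}$. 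The smallness of the prefactor $\alpha^{2}\mu_{N}^{3/4}\leq\mathcal{C}_{max}^{2}\mu_{N}^{-3/4}$ does not help: Young's inequality cannot absorb a full $||A\bar{u}_{N}||^{2}_{\alpha}$ carrying an a.s.\ unbounded coefficient into $\nu||A\bar{u}_{N}||^{2}_{\alpha}$ while leaving a residual of the form $C||\bar{u}_{N}||_{\alpha}^{c_{1}}||\nabla\bar{u}_{N}||_{\alpha}^{c_{2}}$; there is nothing left to put on the other side of Young once the exponent on the dissipation is already $2$.

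The paper avoids this by first rewriting $B_{2}=-\alpha^{2}\left([A\bar{u}_{N}\cdot\nabla]A\bar{u}_{N},\bar{u}_{N}\right)$ (Proposition~\ref{prop trilinear term}-\textit{(ii)}), estimating it through Proposition~\ref{prop trilinear term NS}-\textit{(ii)} as $\alpha^{2}C_{D}||A\bar{u}_{N}||_{\mathbb{L}^{2}}||A^{3/2}\bar{u}_{N}||_{\mathbb{L}^{2}}||\bar{u}_{N}||^{1/2}_{\mathbb{L}^{2}}||A\bar{u}_{N}||^{1/2}_{\mathbb{L}^{2}}$, and then spending the \emph{full} inverse inequality $||A^{3/2}z||_{\mathbb{L}^{2}}\leq\mu_{N}^{3/2}||z||_{\mathbb{L}^{2}}$ on $V_{N}$. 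This trades all of $A^{3/2}\bar{u}_{N}$ for $\mu_{N}^{3/2}||\bar{u}_{N}||_{\mathbb{L}^{2}}$ and lowers the dissipation exponent to $3/2<2$: one gets $\alpha^{2}\mu_{N}^{3/2}||A\bar{u}_{N}||^{3/2}_{\mathbb{L}^{2}}||\bar{u}_{N}||^{3/2}_{\mathbb{L}^{2}}\leq\mathcal{C}_{max}^{2}||A\bar{u}_{N}||^{3/2}_{\mathbb{L}^{2}}||\bar{u}_{N}||^{3/2}_{\mathbb{L}^{2}}$, which Young splits into $\tfrac{3\nu}{4}||A\bar{u}_{N}||^{2}_{\mathbb{L}^{2}}+C\nu^{-3}||\bar{u}_{N}||^{6}_{\mathbb{L}^{2}}$. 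This is exactly where the exponent $-3/4$ on $\mu_{N}$ is consumed ($\alpha^{2}\mu_{N}^{3/2}$ is only bounded, not vanishing), and it is also why $C_{3}$ depends on $||\bar{u}_{0}||_{L^{6p}(\Omega;\mathbb{V})}$: the residual $||\bar{u}_{N}||^{6}_{\mathbb{L}^{2}}$ must be controlled by Lemma~\ref{appen lemma a priori estimates 1} applied with exponent $3p$. Your proof becomes correct once this step is substituted; the rest of your outline matches the paper.
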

	\begin{proof}
		Define the stopping time
		\begin{equation*}
			\tau_{N}^{n} \coloneqq 
			\begin{cases}
				\inf\left\{t \in [0,T]: \left|\left|A^{1/2}(I + \alpha^{2}A)^{-1/2}v_{N}(t)\right|\right|_{\mathbb{L}^{2}} > n\right\} &\mbox{ if the set is non-empty,}\\
				+\infty &\mbox{ otherwise,}
			\end{cases}
		\end{equation*}
		and the process $F(v_{N}) \coloneqq ||A^{1/2}(I + \alpha^{2}A)^{-1/2}v_{N}||^{2}_{\mathbb{L}^{2}}$. By equation~\eqref{eq Faedo-Galerkin}$_{2}$, one gets $F(v_{N}(t \wedge \tau_{N}^{n})) = ||\nabla \bar{u}_{N}(t\wedge \tau_{N}^{n})||^{2}_{\alpha}$. Moreover, $DF(x) = 2A(I + \alpha^{2}A)^{-1}x$, and  $D^{2}F(x) = 2A(I + \alpha^{2}A)^{-1}$. In particular, $DF(v_{N}) = 2A\bar{u}_{N}$, thanks to equation~\eqref{eq Faedo-Galerkin}$_{2}$.By applying It{\^o}'s formula to the process $F(v_{N}(t \wedge \tau_{N}^{n}))$, it follows that
		\begin{equation*}
			\begin{aligned}
				&||\nabla \bar{u}_{N}(t\wedge \tau_{N}^{n})||^{2}_{\alpha} + 2\nu\int_{0}^{t\wedge \tau_{N}^{n}}||A\bar{u}_{N}(s)||^{2}_{\alpha}ds = ||\nabla \bar{u}_{N}(0)||^{2}_{\alpha} 
				\\&+ 2\int_{0}^{t\wedge \tau_{N}^{n}}\left(A\bar{u}_{N}(s), g(s, \bar{u}_{N}(s))dW(s)\right) + \int_{0}^{t\wedge \tau_{N}^{n}}||A^{1/2}(I + \alpha^{2}A)^{-1/2}g(s, \bar{u}_{N}(s))||^{2}_{\mathscr{L}_{2}(K, \mathbb{L}^{2})}ds 
				\\&- 2\int_{0}^{t\wedge \tau_{N}^{n}}\langle \tilde{B}(\bar{u}_{N}(s), v_{N}(s)), A\bar{u}_{N}(s) \rangle ds = ||\nabla \bar{u}_{N}(0)||^{2}_{\alpha} + I_{1} + I_{2} - I_{3}.
			\end{aligned}
		\end{equation*}
		On account of assumption \ref{S2} and the measurability of $\bar{u}_{N}$, we have $\mathbb{E}\left[I_{1}\right] = 0$. Now, the fact that for any $z \in \mathbb{L}^{2}$, the quantity $||A^{1/2}(I + \alpha^{2}A)^{-1/2}z||_{\mathbb{L}^{2}}$ is optimally bounded by $\frac{1}{\alpha}||z||_{\mathbb{L}^{2}}$ justifies the opted assumption on $||g(\cdot, z)||_{\mathscr{L}_{2}(K, \mathbb{H}^{1})}$. Therewith, \ref{S2} leads to
		\begin{equation*}
			\begin{aligned}
				I_{2} \leq \int_{0}^{t\wedge\tau_{N}^{n}}||g(s, \bar{u}_{N}(s))||^{2}_{\mathscr{L}_{2}(K, \mathbb{H}^{1})}ds \leq 2K_{1}^{2}t\wedge\tau_{N}^{n} + 2K_{2}^{2}\int_{0}^{t\wedge\tau_{N}^{n}}||\bar{u}_{N}(s)||^{2}_{\alpha}ds.
			\end{aligned}
		\end{equation*}
		Moreover, by virtue of equation~\eqref{eq Faedo-Galerkin}$_{2}$, the identity $v_{N} = \bar{u}_{N} + \alpha^{2}A\bar{u}_{N}$ holds $\mathbb{P}$-a.s. and a.e. in $(0,T)\times D$. Thus, the integrand of $I_{3}$ can be amended to the following form
		\begin{equation*}
			\begin{aligned}
				\left( \tilde{B}(\bar{u}_{N}(s), \bar{u}_{N}(s)), A\bar{u}_{N}(s)\right) + \alpha^{2}\left( \tilde{B}(\bar{u}_{N}(s), A\bar{u}_{N}(s)), A\bar{u}_{N}(s) \right) \eqqcolon B_{1} + B_{2}.
			\end{aligned}
		\end{equation*}
		Proposition~\ref{prop trilinear term}-\textit{(ii)} yields $B_{1} = \left([\bar{u}_{N}(s)\cdot\nabla]\bar{u}_{N}(s), A\bar{u}_{N}(s)\right) - \left([A\bar{u}_{N}(s)\cdot\nabla]\bar{u}_{N}(s), \bar{u}_{N}(s)\right)$. The first term vanishes thanks to \cite[Lemma 3.1]{temam1995navier}, as well as the second term (see Proposition~\ref{prop trilinear term NS}-\textit{(i)}). Thereby, $B_{1} = 0$. On the other hand, by Proposition~\ref{prop trilinear term}-\textit{(ii)}, it follows $B_{2} = - \alpha^{2}\left([A\bar{u}_{N}(s)\cdot\nabla]A\bar{u}_{N}(s), \bar{u}_{N}(s)\right)$. Hence,
		\begin{equation*}
			\begin{aligned}
				&\left|I_{3}\right| \leq 2\alpha^{2}C_{D}\int_{0}^{t\wedge \tau_{N}^{n}}||A\bar{u}_{N}(s)||_{\mathbb{L}^{2}}||A^{3/2}\bar{u}_{N}(s)||_{\mathbb{L}^{2}}||\bar{u}_{N}(s)||^{1/2}_{\mathbb{L}^{2}}||A\bar{u}_{N}(s)||^{1/2}_{\mathbb{L}^{2}}ds \\&\leq 2\alpha^{2}C_{D}\mu_{N}^{3/2}\int_{0}^{t\wedge\tau_{N}^{n}}||A\bar{u}_{N}(s)||_{\mathbb{L}^{2}}^{3/2}||\bar{u}_{N}(s)||^{3/2}_{\mathbb{L}^{2}}ds \\&\leq \frac{4\mathcal{C}_{max}^{8}C^{4}_{D}}{\nu^{3}}\int_{0}^{t\wedge\tau_{N}^{n}}||\bar{u}_{N}(s)||^{6}_{\mathbb{L}^{2}}ds + \frac{3\nu}{4}\int_{0}^{t\wedge\tau_{N}^{n}}||A\bar{u}_{N}(s)||^{2}_{\mathbb{L}^{2}}ds.
			\end{aligned}
		\end{equation*}
		where Proposition~\ref{prop trilinear term NS}-\textit{(ii)}, estimate $||A^{3/2}z||_{\mathbb{L}^{2}}\leq \mu_{N}^{3/2}||z||_{\mathbb{L}^{2}}$, for all $z\in V_{N}$, condition $\alpha \leq \mathcal{C}_{max}\mu_{N}^{-3/4}$ together with the Young inequality with conjugate exponents $1/4$ and $3/4$ were taken advantage of. Observe that 
		\begin{equation*}
			\begin{aligned}
				&||\nabla\bar{u}_{N}(0)||_{\alpha}^{2} = ||\nabla \bar{u}_{N}(0)||^{2}_{\mathbb{L}^{2}} + \alpha^{2}||A\bar{u}_{N}(0)||^{2}_{\mathbb{L}^{2}} \leq ||\nabla \bar{u}_{N}(0)||^{2}_{\mathbb{L}^{2}} + \mathcal{C}_{max}^{2}||\nabla\bar{u}_{N}(0)||^{2}_{\mathbb{L}^{2}} 
				\\&\leq (1 + \mathcal{C}_{max}^{2})||\nabla \bar{u}_{0}||^{2}_{\mathbb{L}^{2}},
			\end{aligned}
		\end{equation*}
		thanks to $\alpha \leq \mathcal{C}_{max}/\mu_{N}^{3/4} \leq \mathcal{C}_{max}$, and estimate $||Az||_{\mathbb{L}^{2}} \leq \sqrt{\mu_{N}}||\nabla z||_{\mathbb{L}^{2}}$ for all $z \in V_{N}$.	
		Taking into account that $\sup\limits_{0 \leq t \leq T}||\bar{u}_{N}(t)||^{2q}_{\alpha}$ is almost surely finite for all $q \geq 2$ on account of Lemma~\ref{appen lemma a priori estimates 1}, the stopping time of last and first terms on the right-hand side of $I_{2}$ and $I_{3}$ can be omitted. Thereby,
		\begin{equation}\label{eq calc15}
			\begin{aligned}
				&||\nabla\bar{u}_{N}(t\wedge\tau_{N}^{n})||^{2}_{\alpha} + \frac{5\nu}{4}\int_{0}^{t\wedge\tau_{N}^{n}}||A\bar{u}_{N}(s)||^{2}_{\alpha}ds \leq (1 + \mathcal{C}_{max}^{2})||\nabla \bar{u}_{0}||^{2}_{\mathbb{L}^{2}} + 2K_{1}^{2}t\wedge\tau_{N}^{n}
				\\&+ 2\int_{0}^{t\wedge\tau_{N}^{n}}\left(A\bar{u}_{N}(s), g(s, \bar{u}_{N}(s))dW(s)\right) + 2K_{2}^{2}\int_{0}^{t}||\bar{u}_{N}(s)||^{2}_{\alpha}ds 
				\\&+ \frac{4\mathcal{C}_{max}^{8}C_{D}^{4}}{\nu^{3}}\int_{0}^{t}||\bar{u}_{N}(s)||^{6}_{\mathbb{L}^{2}}ds.
			\end{aligned}
		\end{equation}
		Subsequently, taking the mathematical expectation, employing Lemma~\ref{appen lemma a priori estimates 1} and letting $n \to +\infty$ imply
		\begin{equation}\label{eq calc16}
			\begin{aligned}
				&\mathbb{E}\left[||\nabla \bar{u}_{N}(t)||^{2}_{\alpha}\right] + \frac{5\nu}{4}\mathbb{E}\left[\int_{0}^{t}||A\bar{u}_{N}(s)||^{2}_{\alpha}ds\right] \\&\leq (1 + \mathcal{C}_{max}^{2})\mathbb{E}\left[||\nabla\bar{u}_{0}||^{2}_{\mathbb{L}^{2}}\right] + 2K_{1}^{2}T + (2K_{2}^{2} + \frac{4\mathcal{C}_{max}^{8}C_{D}^{4}}{\nu^{3}})TC_{1}.
			\end{aligned}
		\end{equation}
		We now raise equation~\eqref{eq calc15} to the power $p$, use Proposition~\ref{prop inequalities}-\textit{(iii)}, and drop the stopping time $\tau_{N}^{n}$, thanks to estimate~\eqref{eq calc16}. We obtain
		\begin{equation}\label{eq calc17}
			\begin{aligned}
				&\sup\limits_{0 \leq t \leq T}||\nabla\bar{u}_{N}(t)||^{2p}_{\alpha} + \left(\frac{5\nu}{4}\int_{0}^{T}||A\bar{u}_{N}(t)||^{2}_{\alpha}dt\right)^{p} \lesssim ||\nabla \bar{u}_{0}||^{2p}_{\mathbb{L}^{2}} + (K_{1}^{2}T)^{p} \\&+ \left(\sup\limits_{0 \leq t \leq T}\int_{0}^{t}\left(\nabla\bar{u}_{N}(s), \nabla g(s, \bar{u}_{N}(s))dW(s)\right)\right)^{p} + (K_{2}^{2}T)^{p}\sup\limits_{0 \leq t \leq T}||\bar{u}_{N}(t)||^{2p}_{\alpha} \\&+ (\mathcal{C}_{max}^{8}C_{D}^{4}T/\nu^{3})^{p}\sup\limits_{0 \leq t \leq T}||\bar{u}_{N}(t)||^{6p}_{\mathbb{L}^{2}}.
			\end{aligned}
		\end{equation}
		We bound the third term on the right-hand side using the Burkholder-Davis-Gundy and Young inequalities, assumption~\ref{S2}, and Proposition~\ref{prop inequalities}-\textit{(iii)}:
		\begin{equation*}
			\begin{aligned}
				&\mathbb{E}\left[\left(\sup\limits_{0 \leq t \leq T}\int_{0}^{t}\left(\nabla\bar{u}_{N}(s), \nabla g(s, \bar{u}_{N}(s))dW(s)\right)\right)^{p}\right]\\&\lesssim \mathbb{E}\left[\left(\int_{0}^{T}||\nabla \bar{u}_{N}(t)||^{2}_{\mathbb{L}^{2}}||\nabla g(t, \bar{u}_{N}(t))||^{2}_{\mathscr{L}_{2}(K; \mathbb{L}^{2})}dt\right)^{p/2}\right] 
				\\&\leq \frac{1}{2}\mathbb{E}\left[\sup\limits_{0 \leq t \leq T}||\nabla \bar{u}_{N}(t)||^{2p}_{\alpha}\right] + 2^{2p-2}T^{p}\mathbb{E}\left[K_{1}^{2p} + K_{2}^{2p}\sup\limits_{0 \leq t \leq T}||\bar{u}_{N}(t)||^{2p}_{\alpha}\right],
			\end{aligned}
		\end{equation*}
		Taking afterwards the mathematical expectation of equation~\eqref{eq calc17} and employing Lemma~\ref{appen lemma a priori estimates 1} complete the proof of estimate \textit{(i)}. On the other hand, $||\nabla v_{N}(t)||^{2}_{\mathbb{L}^{2}} \leq 2\max(1, \mathcal{C}_{max}^{2})||\nabla\bar{u}_{N}(t)||^{2}_{\alpha}$ holds for all $t \in [0,T]$, thanks to equation~\eqref{eq calc5} which is slightly amended here to fit the case $\alpha \leq \mathcal{C}_{max}\mu_{N}^{-3/4}$. Furthermore, multiplying in $\mathbb{L}^{2}$ the identity $v_{N}(t) = \bar{u}_{N}(t) + \alpha^{2}A\bar{u}_{N}(t)$ by $A^{2}v_{N}$ and making use of Cauchy-Schwarz inequality give 
		\begin{equation*}
			||Av_{N}(t)||_{\mathbb{L}^{2}}^{2} \leq ||A\bar{u}_{N}(t)||_{\mathbb{L}^{2}}||Av_{N}(t)||_{\mathbb{L}^{2}} + \alpha^{2}||A^{3/2}\bar{u}_{N}(t)||_{\mathbb{L}^{2}}||A^{3/2}v_{N}(t)||_{\mathbb{L}^{2}}.
		\end{equation*}
		We use $\alpha \leq \mathcal{C}_{max}\mu_{N}^{-3/4}$, $||A^{3/2}v_{N}||_{\mathbb{L}^{2}} \leq \sqrt{\mu_{N}}||Av_{N}||_{\mathbb{L}^{2}}$, and simplify by $||Av_{N}(t)||_{\mathbb{L}^{2}}$ to obtain eventually 
		\begin{equation*}
			||Av_{N}(t)||_{\mathbb{L}^{2}} \leq \max(1, \mathcal{C}_{max})\left(||A\bar{u}_{N}(t)||_{\mathbb{L}^{2}} + \alpha||A^{3/2}\bar{u}_{N}(t)||_{\mathbb{L}^{2}}\right).
		\end{equation*}
		Squaring both sides offers $||Av_{N}(t)||_{\mathbb{L}^{2}}^{2} \leq 2\max(1, \mathcal{C}_{max}^{2})||A\bar{u}_{N}(t)||^{2}_{\alpha}$. The proof of inequality \textit{(ii)} follows after applying estimate \textit{(i)}.
	\end{proof}
	
	\section{Convergence of system~\eqref{eq Faedo-Galerkin}}\label{section convergence}
	This section is devoted to proving the convergence of $\left(\bar{u}_{N}, v_{N}\right)$ towards the unique strong solution of the stochastic Navier-Stokes equations. The followed steps are typical: we first need to bound each item of system~\eqref{eq Faedo-Galerkin} in a reflexive Banach space. Then, limits identification shall be carried out to match all Navier-Stokes problem's terms.\newline
	\noindent\textbf{Boundedness and convergence:} Now that all data are clear, we begin by bounding each term of equations~\eqref{eq Faedo-Galerkin} in a suitable reflexive Banach space. The bilinear operator $\{\tilde{B}(\bar{u}_{N}, v_{N})\}_{N}$ is bounded in $L^{2}(\Omega; L^{2}(0,T; \mathbb{V}'))$. Indeed, Proposition~\ref{prop trilinear term}-\textit{(iii)}, the embedding $\mathbb{H}^{1} \hookrightarrow \mathbb{L}^{4}$, the Cauchy-Schwarz inequality, and Lemma~\ref{appen lemma a priori estimates 2} yield
	\begin{equation*}
		\begin{aligned}
			\mathbb{E}\left[\int_{0}^{T}\left|\left|\tilde{B}(\bar{u}_{N}(t), v_{N}(t))\right|\right|^{2}_{\mathbb{V}'}dt \right] \leq C_{D}\mathbb{E}\left[\sup_{t \in [0,T]}\left|\left|\nabla\bar{u}_{N}(t)\right|\right|^{2}_{\mathbb{L}^{2}} \int_{0}^{T}\left|\left|\nabla v_{N}(t)\right|\right|^{2}_{\mathbb{L}^{2}}dt\right] \leq C_{D}C_{3}C_{4}.
		\end{aligned}
	\end{equation*}
	Therefore, setting $R(\bar{u}_{N}) \coloneqq -\nu\Delta v_{N} + \tilde{B}(\bar{u}_{N}, v_{N})$, we conclude from Lemma~\ref{appen lemma a priori estimates 1} that $\{R(\bar{u}_{N})\}_{N}$ is bounded in $L^{2}(\Omega; L^{2}(0,T; \mathbb{V}'))$.	Moreover, by virtue of Lemma~\ref{appen lemma a priori estimates 1} and assumption~\ref{S2}, $\{v_{N}\}_{N}$, $\{\bar{u}_{N}\}_{N}$ are bounded in $L^{2}(\Omega; L^{\infty}(0,T; \mathbb{H}) \cap L^{2}(0,T; \mathbb{V}))$, and $\{g(\cdot, \bar{u}_{N})\}_{N}$ too in the Hilbert space $L^{2}(\Omega; L^{2}(0,T; \mathscr{L}_{2}(K, \mathbb{L}^{2})))$. This implies the existence of of two subsequences $\{v_{N_{\ell}}\}_{\ell}$, $\{\bar{u}_{N_{\ell}}\}_{\ell}$ of $\{v_{N}\}_{N}$, $\{\bar{u}_{N}\}_{N}$ respectively, and four limiting functions $v_{NS}$, $u_{NS} \in L^{2}(\Omega; L^{\infty}(0,T; \mathbb{H}) \cap L^{2}(0,T; \mathbb{V}))$, $R_{0} \in L^{2}(\Omega; L^{2}(0, T; \mathbb{V}'))$, and $g_{0} \in L^{2}(\Omega; L^{2}(0,T; \mathscr{L}_{2}(K, \mathbb{L}^{2})))$ such that
	\begin{align}
		&v_{N_{\ell}} \rightharpoonup v_{NS} \ \ \& \ \ \bar{u}_{N_{\ell}} \rightharpoonup u_{NS} \mbox{ (weakly)} &\mbox{ in } L^{2}(\Omega; L^{2}(0,T; \mathbb{V})), \label{appen conv1} \\&
		v_{N_{\ell}} \overset{\ast}{\rightharpoonup} v_{NS} \ \ \& \ \ \bar{u}_{N_{\ell}} \overset{\ast}{\rightharpoonup} u_{NS} \mbox{ (weakly-$\ast$)} &\mbox{ in } L^{2}(\Omega; L^{\infty}(0,T; \mathbb{H})),\label{appen conv2} \\&
		R(\bar{u}_{N_{\ell}}) \rightharpoonup R_{0} \mbox{ (weakly)} &\mbox{ in } L^{2}(\Omega; L^{2}(0,T; \mathbb{V}')), \label{appen conv3} \\&
		g(\cdot, \bar{u}_{N_{\ell}}) \rightharpoonup g_{0} \mbox{ (weakly)} &\mbox{ in } L^{2}(\Omega; L^{2}(0,T; \mathscr{L}_{2}(K, \mathbb{L}^{2}))).\label{appen conv4}
	\end{align}
	As a result, the limiting function $v_{NS}$ satisfies $\mathbb{P}$-a.s. and for all $t \in [0,T]$ the equation:
	\begin{equation}\label{eq limiting equation}
		\left(v_{NS}(t), \varphi\right) + \int_{0}^{t}\langle R_{0}(s), \varphi\rangle ds = \left(v_{0}, \varphi\right) + \left(\int_{0}^{t}g_{0}(s)dW(s), \varphi\right), \ \ \forall \varphi \in \mathbb{V},
	\end{equation}
	where we recall the $v_{0}$ is the limit of $P_{N}v_{0}$ as $N \to +\infty$ in $L^{4}(\Omega; \mathbb{H})$. Making use of the classical approach in \cite{Pardoux1975}, and taking into account equation~\eqref{eq limiting equation} which is fulfilled by $v_{NS}$, it is straightforward to show that $v_{NS} \in L^{2}(\Omega; C([0,T]; \mathbb{H}))$. Besides, identity $v_{N_{\ell}} = \bar{u}_{N_{\ell}} + \alpha^{2}A\bar{u}_{N_{\ell}}$ grants equality between processes $u_{NS}$ and $v_{NS}$. Indeed, for all $\varphi \in \mathbb{H}$, it holds that 
	\begin{equation*}
		\left|\alpha^{2}\mathbb{E}\left[\int_{0}^{T}\left(A\bar{u}_{N_{\ell}}(t), \varphi\right)dt\right]\right| \leq \alpha||\varphi||_{\mathbb{L}^{2}}\mathbb{E}\left[\int_{0}^{T}\alpha^{2}||A\bar{u}_{N_{\ell}}(t)dt||^{2}_{\mathbb{L}^{2}}dt\right]^{1/2} \leq \alpha||\varphi||_{\mathbb{L}^{2}}C_{1} \to 0
	\end{equation*}
	as $\ell \to +\infty$, thanks to the hypothesis $\alpha \leq \mathcal{C}_{max}\mu_{N}^{-3/4}$. Subsequently, $\{\alpha^{2}A\bar{u}_{N_{\ell}}\}_{\ell}$ converges weakly in $L^{2}(\Omega; L^{2}(0,T; \mathbb{L}^{2}))$ to $0$, which offers, by the use of the aforementioned identity together with \eqref{appen conv1}, the equality $u_{NS} = v_{NS}$ $\mathbb{P}$-a.s. and a.e. in $[0,T] \times D$. The only remaining task in this section consists in identifying $R_{0}$ and $g_{0}$ with their solution-dependent counterparts. To this purpose, we must first state one essential property that enables such an identification.
	\begin{prop}\label{prop monotonicity}
		For $N \in \mathbb{N}\backslash \{0\}$, assume that $\alpha \leq \mathcal{C}_{max}\mu_{N}^{-3/4}$. Let $v_{N}^{1}, v_{N}^{2}$ be two vector fields in $V_{N}$ such that $v_{N}^{1} = \bar{u}_{N}^{1} + \alpha^{2}A\bar{u}_{N}^{1}$ and $v_{N}^{2} = \bar{u}_{N}^{2} + \alpha^{2}A\bar{u}_{N}^{2}$. If $L_{g} \leq \frac{\sqrt{\nu}}{C_{P}\sqrt{2}}$ then, there exists a constant $\mathcal{K} > 0$ depending only on $D$ and $\mathcal{C}_{max}$ such that
		\begin{equation*}
			\begin{aligned}
				&\left\langle -\nu\Delta (v_{N}^{1} - v_{N}^{2}) + \tilde{B}(\bar{u}_{N}^{1}, v_{N}^{1}) - \tilde{B}(\bar{u}_{N}^{2}, v_{N}^{2}) + \frac{\mathcal{K}}{\nu^{3}}\left|\left|\bar{u}_{N}^{2}\right|\right|_{\mathbb{L}^{4}}^{4}w_{N}, w_{N}\right\rangle 
				\\&- \left|\left|g(\cdot, \bar{u}_{N}^{1}) - g(\cdot, \bar{u}_{N}^{2})\right|\right|^{2}_{\mathscr{L}_{2}(K, \mathbb{L}^{2})} \geq 0,
			\end{aligned}
		\end{equation*}
		where $C_{P} > 0$ is the Poincar{\'e} constant and $w_{N} \coloneqq \bar{u}_{N}^{1} - \bar{u}_{N}^{2}$.
	\end{prop}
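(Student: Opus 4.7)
The plan is to expand each piece of the duality pairing, isolate the manifestly nonnegative contributions (diffusion, penalty, what remains of the Lipschitz bound), and show that they dominate the two surviving nonlinear cross-terms via H\"older, Ladyzhenskaya, Young's inequalities, and the spectral inverse inequality on $V_N$. First I would exploit $v_N^i = \bar{u}_N^i + \alpha^2 A\bar{u}_N^i$ to rewrite $v_N^1 - v_N^2 = w_N + \alpha^2 Aw_N$, which after integration by parts gives
\begin{equation*}
\langle -\nu\Delta(v_N^1 - v_N^2), w_N\rangle = \nu\|\nabla w_N\|_{\mathbb{L}^2}^2 + \nu\alpha^2\|Aw_N\|_{\mathbb{L}^2}^2 = \nu\|\nabla w_N\|_\alpha^2.
\end{equation*}
The nonlinearity splits naturally as
\begin{equation*}
\tilde{B}(\bar{u}_N^1, v_N^1) - \tilde{B}(\bar{u}_N^2, v_N^2) = \tilde{B}(w_N, v_N^1) + \tilde{B}(\bar{u}_N^2, w_N) + \alpha^2\tilde{B}(\bar{u}_N^2, Aw_N),
\end{equation*}
and, paired with $w_N$, the first summand vanishes by Proposition~\ref{prop trilinear term}-\textit{(i)}, while Proposition~\ref{prop trilinear term}-\textit{(ii)} together with $\mathrm{div}\,\bar{u}_N^2 = 0$ reduces $\langle\tilde{B}(\bar{u}_N^2, w_N), w_N\rangle$ to $-([w_N\cdot\nabla]w_N, \bar{u}_N^2)$.

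Next I would absorb the two surviving nonlinear pieces into the diffusion up to a $\|\bar{u}_N^2\|_{\mathbb{L}^4}^4\|w_N\|_{\mathbb{L}^2}^2$ remainder. For $|([w_N\cdot\nabla]w_N, \bar{u}_N^2)|$, a H\"older-Ladyzhenskaya estimate yields $C_D\|w_N\|_{\mathbb{L}^2}^{1/2}\|\nabla w_N\|_{\mathbb{L}^2}^{3/2}\|\bar{u}_N^2\|_{\mathbb{L}^4}$, which Young's inequality with conjugate exponents $4/3$ and $4$ converts into $\tfrac{\nu}{4}\|\nabla w_N\|_{\mathbb{L}^2}^2 + \tfrac{C_1}{\nu^3}\|\bar{u}_N^2\|_{\mathbb{L}^4}^4\|w_N\|_{\mathbb{L}^2}^2$. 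For the $\alpha^2$-term I would apply Proposition~\ref{prop trilinear term}-\textit{(iii)} with $u = \bar{u}_N^2$, $v = Aw_N$, $w = w_N$, bound $\|\nabla Aw_N\|_{\mathbb{L}^2}\leq\sqrt{\mu_N}\|Aw_N\|_{\mathbb{L}^2}$ via the spectral inverse inequality on $V_N$, and invoke $\alpha\leq\mathcal{C}_{max}\mu_N^{-3/4}$ (together with $\mu_N\geq 1$) to convert the spurious factor $\alpha^2\sqrt{\mu_N}$ into $\mathcal{C}_{max}\,\alpha$. Two successive applications of Young's inequality then produce
\begin{equation*}
\alpha^2\big|\langle\tilde{B}(\bar{u}_N^2, Aw_N), w_N\rangle\big| \leq \tfrac{\nu}{2}\alpha^2\|Aw_N\|_{\mathbb{L}^2}^2 + \tfrac{\nu}{4}\|\nabla w_N\|_{\mathbb{L}^2}^2 + \tfrac{C_2}{\nu^3}\|\bar{u}_N^2\|_{\mathbb{L}^4}^4\|w_N\|_{\mathbb{L}^2}^2,
\end{equation*}
with $C_2$ depending only on $D$ and $\mathcal{C}_{max}$.

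It remains to handle the Lipschitz term. Assumption~\ref{S2} provides $\|g(\cdot,\bar{u}_N^1) - g(\cdot,\bar{u}_N^2)\|_{\mathscr{L}_2(K,\mathbb{L}^2)}^2 \leq L_g^2\|w_N\|_\alpha^2$, and two applications of the Poincar\'e inequality on the periodic zero-mean fields $w_N$ and $\nabla w_N$, namely $\|w_N\|_{\mathbb{L}^2}^2\leq C_P^2\|\nabla w_N\|_{\mathbb{L}^2}^2$ and $\|\nabla w_N\|_{\mathbb{L}^2}^2\leq C_P^2\|Aw_N\|_{\mathbb{L}^2}^2$, give $\|w_N\|_\alpha^2 \leq C_P^2\|\nabla w_N\|_\alpha^2$. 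The hypothesis $L_g^2\leq\nu/(2C_P^2)$ then collapses the Lipschitz contribution to $L_g^2\|w_N\|_\alpha^2 \leq \tfrac{\nu}{2}\|\nabla w_N\|_\alpha^2$. Assembling everything and setting $\mathcal{K}\coloneqq C_1 + C_2$, the diffusive $\nu\|\nabla w_N\|_\alpha^2$ absorbs the two Young-produced gradient terms and the Lipschitz piece, while the penalty cancels the two bad $\|\bar{u}_N^2\|_{\mathbb{L}^4}^4\|w_N\|_{\mathbb{L}^2}^2$ contributions, leaving a nonnegative remainder.

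The genuine obstacle is the $\alpha^2$ cross-term $\alpha^2\langle\tilde{B}(\bar{u}_N^2, Aw_N), w_N\rangle$: it drags in the third-derivative quantity $\|\nabla Aw_N\|_{\mathbb{L}^2}$, which cannot be controlled in $\mathbb{V}$, and only the sharp decay $\alpha = O(\mu_N^{-3/4})$—stronger than the $O(\mu_N^{-1/2})$ exploited in Lemma~\ref{appen lemma a priori estimates 1}—enables the spectral inverse inequality to close the estimate. This is precisely the place where the decay rate imposed on $\alpha$ in Theorem~\ref{main theorem} becomes essential.
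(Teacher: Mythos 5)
Your proof is correct and follows essentially the same route as the paper's: the diffusion yields $\nu\|\nabla w_N\|_{\alpha}^{2}$, skew-symmetry kills $\tilde{B}(w_N,v_N^{1})$ against $w_N$, the surviving nonlinear contribution is absorbed via Ladyzhenskaya, the spectral inverse inequality on $V_N$ and Young's inequality into the diffusion plus the $\|\bar{u}_N^{2}\|_{\mathbb{L}^{4}}^{4}\|w_N\|_{\mathbb{L}^{2}}^{2}$ penalty, and the Lipschitz term is handled by Poincar{\'e} exactly as in the paper; the only cosmetic difference is that you split off $\alpha^{2}\tilde{B}(\bar{u}_N^{2},Aw_N)$ and estimate it separately, whereas the paper keeps $\tilde{B}(\bar{u}_N^{2},v_N^{1}-v_N^{2})$ intact and bounds $\|\nabla(v_N^{1}-v_N^{2})\|_{\mathbb{L}^{2}}\leq(1+\mathcal{C}_{max})\|\nabla w_N\|_{\mathbb{L}^{2}}$ in one stroke. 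One small caveat on your closing commentary: the rate $\alpha=O(\mu_N^{-3/4})$ is not what closes this particular estimate, since $\alpha\leq\mathcal{C}_{max}\mu_N^{-1/2}$ already gives $\alpha^{2}\sqrt{\mu_N}\leq\mathcal{C}_{max}\alpha$; as the paper's remark following the Faedo--Galerkin system points out, the extra negative exponent is needed only for the high-regularity bounds of Lemma~\ref{appen lemma a priori estimates 2}.
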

	\begin{proof}
		$\langle -\nu\Delta(v_{N}^{1} - v_{N}^{2}), w_{N} \rangle = \nu\left(A^{1/2}(I + \alpha^{2}A)w_{N}, A^{1/2}w_{N}\right) = \nu||\nabla w_{N}||^{2}_{\alpha}$. Besides, Proposition~\ref{prop trilinear term}-\textit{(i)} and \textit{(iii)} yield 
		\begin{equation}\label{eq calc3}
			\begin{aligned}
				&\left|\langle \tilde{B}(\bar{u}_{N}^{1}, v_{N}^{1}) - \tilde{B}(\bar{u}_{N}^{2}, v_{N}^{2}), w_{N}\rangle\right| = \left|\langle \tilde{B}(\bar{u}_{N}^{2}, v_{N}^{1} - v_{N}^{2}), w_{N}\rangle\right| \\&\leq C_{D}||\bar{u}_{N}^{2}||_{\mathbb{L}^{4}}||\nabla(v_{N}^{1} - v_{N}^{2})||_{\mathbb{L}^{2}}||w_{N}||^{\frac{1}{2}}_{\mathbb{L}^{2}}||\nabla w_{N}||^{\frac{1}{2}}_{\mathbb{L}^{2}},
			\end{aligned}
		\end{equation}
		where identity $v_{N}^{1} - v_{N}^{2} = w_{N} + \alpha^{2}Aw_{N}$ implies $\nabla (v_{N}^{1} - v_{N}^{2}) = \nabla w_{N} + \alpha^{2}\nabla Aw_{N}$ and therefore, it follows that $||\nabla(v_{N}^{1} - v_{N}^{2})||_{\mathbb{L}^{2}} \leq (1 + \mathcal{C}_{max})||\nabla w_{N}||_{\mathbb{L}^{2}}$, thanks to the condition $\alpha \leq \mathcal{C}_{max}\mu_{N}^{-3/4}$. Plugging this result back into equation~\eqref{eq calc3} and applying the Young inequality to get
		\begin{equation*}
			\left|\langle \tilde{B}(\bar{u}_{N}^{1}, v_{N}^{1}) - \tilde{B}(\bar{u}_{N}^{2}, v_{N}^{2}), w_{N}\rangle\right| \leq \frac{\nu}{4}||\nabla w_{N}||^{2}_{\mathbb{L}^{2}} + \frac{\mathcal{K}}{\nu^{3}}||\bar{u}_{N}^{2}||_{\mathbb{L}^{4}}^{4}||w_{N}||^{2}_{\mathbb{L}^{2}},
		\end{equation*}
		where $\mathcal{K} > 0$ depends only on $C_{D}$ and $\mathcal{C}_{max}$. Assumption~\ref{S2} implies $-||g(\cdot,. \bar{u}_{N}^{1}) - g(\cdot, \bar{u}_{N}^{2})||_{\mathscr{L}_{2}(K, \mathbb{L}^{2})}^{2} \geq - L_{g}^{2}||w_{N}||_{\alpha}^{2}$ in addition. Putting it all together and employing the Poincar{\'e} inequality, we obtain
		\begin{equation*}
			\begin{aligned}
				\\&\langle -\nu\Delta (v_{N}^{1} - v_{N}^{2}) + \tilde{B}(\bar{u}_{N}^{1}, v_{N}^{1}) - \tilde{B}(\bar{u}^{2}_{N}, v_{N}^{2}) + \frac{\mathcal{K}}{\nu^{3}}||\bar{u}_{N}^{2}||^{4}_{\mathbb{L}^{4}}w_{N}, w_{N} \rangle - ||g(\cdot, \bar{u}_{N}^{1}) - g(\cdot, \bar{u}_{N}^{2})||^{2}_{\mathscr{L}_{2}(K, \mathbb{L}^{2})} \\&\geq (\frac{\nu}{2} - L_{g}^{2}C_{P}^{2})||\nabla w_{N}||^{2}_{\mathbb{L}^{2}} + \alpha^{2}(\nu - L_{g}^{2}C_{P}^{2})||A w_{N}||^{2}_{\mathbb{L}^{2}}
			\end{aligned}
		\end{equation*}
		which is nonnegative when $L_{g} \leq \frac{\sqrt{\nu}}{C_{P}\sqrt{2}}$.
	\end{proof}
	\begin{rmk}
		The quantities $\bar{u}_{N}^{1}$ and $\bar{u}_{N}^{2}$ in the statement of Proposition~\ref{prop monotonicity} exist and are unique, thanks to the bijectivity of operator $I + \alpha^{2}A$ from $D(A)$ to $\mathbb{H}$.
	\end{rmk}
	\textbf{Limits identification:} For clarity's sake, the subsequences' subscript $N_{\ell}$ will be henceforth denoted $N$. Let $0 < m < N$ be a fixed integer, and $z, \bar{z} \in L^{\infty}(\Omega \times (0,T); V_{m})$ be such that $z = \bar{z} + \alpha^{2}A\bar{z}$. For $t \in [0,T]$, define the real valued process $\rho(\omega, t) \coloneqq \frac{2\mathcal{K}}{\nu^{3}}\int_{0}^{t}\left|\left|z(\omega, s)\right|\right|^{4}_{\mathbb{L}^{4}}ds$, where the constant $\mathcal{K}$ is that of Proposition~\ref{prop monotonicity}. Due to the properties of $z$, the process $\rho$ is clearly time-continuous and adapted. By application of It{\^o}'s formula to the process $t \mapsto e^{-\rho(t)}||v_{N}(t)||_{\mathbb{L}^{2}}^{2}$, it follows that
	\begin{equation*}
		\begin{aligned}
			&e^{-\rho(t)}||v_{N}(t)||^{2}_{\mathbb{L}^{2}} = ||v_{N}(0)||_{\mathbb{L}^{2}}^{2} + 	2\int_{0}^{t}e^{-\rho(s)}\left(v_{N}(s), g(s, \bar{u}_{N}(s))dW(s)\right) 
			\\&- \frac{2\mathcal{K}}{\nu^{3}}\int_{0}^{t}e^{-\rho(s)}||z(s)||^{4}_{\mathbb{L}^{4}}||v_{N}(s)||^{2}_{\mathbb{L}^{2}}ds - 2\int_{0}^{t}e^{-\rho(s)}\left(v_{N}(s), R(\bar{u}_{N}(s))\right)ds 
			\\&+ \int_{0}^{t}e^{-\rho(s)}||P_{N}g(s, \bar{u}_{N}(s))||^{2}_{\mathscr{L}_{2}(K, \mathbb{L}^{2})}ds,
		\end{aligned}
	\end{equation*}
	where we recall that $R(\bar{u}_{N}) = \nu Av_{N} + \tilde{B}(\bar{u}_{N}, v_{N})$. The mathematical expectation of the second term on the right-hand side is null, thanks to assumption~\ref{S2} and the measurability of $v_{N}$. Therefore, the above equation transforms into
	\begin{equation}\label{eq calc6}
		\begin{aligned}
			&\mathbb{E}\left[e^{-\rho(T)}||v_{N}(T)||^{2}_{\mathbb{L}^{2}} - ||v_{N}(0)||_{\mathbb{L}^{2}}^{2}\right] 
			\\&= \frac{2\mathcal{K}}{\nu^{3}}\mathbb{E}\left[\int_{0}^{T}e^{-\rho(t)}||z(t)||_{\mathbb{L}^{4}}^{4}\left\{||z(t)||_{\mathbb{L}^{2}}^{2} - 2\left(v_{N}(t), z(t)\right)\right\}dt\right] 
			\\& -2\mathbb{E}\left[\int_{0}^{T}e^{-\rho(t)}\left(R(\bar{u}_{N}(t)) - R(\bar{z}(t)) + \frac{\mathcal{K}}{\nu^{3}}||z(t)||^{4}_{\mathbb{L}^{4}}\left(v_{N}(t) - z(t)\right), v_{N}(t) - z(t)\right)dt\right] 
			\\& -2\mathbb{E}\left[\int_{0}^{T}e^{-\rho(t)}\left(R(\bar{u}_{N}(t)) - R(\bar{z}(t)), z(t)\right)dt\right] -2\mathbb{E}\left[\int_{0}^{T}e^{-\rho(t)}\left(R(\bar{z}(t)), v_{N}(t)\right)dt\right]
			\\&+\mathbb{E}\left[\int_{0}^{T}e^{-\rho(t)}||P_{N}g(t, \bar{u}_{N}(t)) - P_{N}g(t, \bar{z}(t))||^{2}_{\mathscr{L}_{2}(K, \mathbb{L}^{2})}dt\right] 
			\\&+2\mathbb{E}\left[\int_{0}^{T}e^{-\rho(t)}\left(P_{N}g(t, \bar{u}_{N}(t)), P_{N}g(t, \bar{z}(t))\right)_{\mathscr{L}_{2}(K, \mathbb{L}^{2})}dt\right]
			\\&- \mathbb{E}\left[\int_{0}^{T}e^{-\rho(t)}||P_{N}g(t, \bar{z}(t))||^{2}_{\mathscr{L}_{2}(K, \mathbb{L}^{2})}dt\right] \eqqcolon I_{1} + \dotsc + I_{7},
		\end{aligned}
	\end{equation}
	where the notation $\left(\cdot, \cdot\right)_{\mathscr{L}_{2}(K, \mathbb{L}^{2})}$ represents the $\mathscr{L}_{2}(K, \mathbb{L}^{2})$-scalar product. By convergence~\ref{appen conv1}, $I_{1}$ converges toward $\frac{2\mathcal{K}}{\nu^{3}}\mathbb{E}\left[\int_{0}^{T}e^{-\rho(t)}||z(t)||^{4}_{\mathbb{L}^{4}}\left\{||z(t)||^{2}_{\mathbb{L}^{2}} - 2\left(v_{NS}(t), z(t)\right)\right\}dt\right]$ as $N \to +\infty$. Moreover, 
	\begin{equation*}
		\begin{aligned}
			I_{2} =& -2\mathbb{E}\left[\int_{0}^{T}e^{-\rho(t)}\left(R(\bar{u}_{N}(t)) - R(\bar{z}(t)) + \frac{\mathcal{K}}{\nu^{3}}||z(t)||^{4}_{\mathbb{L}^{4}}\big(\bar{u}_{N}(t), \bar{z}(t)\big), \bar{u}_{N}(t) - \bar{z}(t)\right)dt\right] 
			\\&-2\alpha^{2}\mathbb{E}\left[\int_{0}^{T}e^{-\rho(t)}\left(R(\bar{u}_{N}(t)) - R(\bar{z}(t)) + \frac{\mathcal{K}}{\nu^{3}}||z(t)||^{4}_{\mathbb{L}^{4}}(\bar{u}_{N}(t) - \bar{z}(t)), A\bar{u}_{N}(t) - A\bar{z}(t)\right)dt\right]
			\\&-\frac{2\mathcal{K}\alpha^{2}}{\nu^{3}}\mathbb{E}\left[\int_{0}^{T}e^{-\rho(t)}||z(t)||^{4}_{\mathbb{L}^{4}}\left(A\bar{u}_{N}(t) - A\bar{z}(t), v_{N}(t) - z(t)\right)dt\right] \eqqcolon I_{2,1} + I_{2,2} + I_{2,3}.
		\end{aligned}
	\end{equation*}
	Proposition~\ref{prop monotonicity} implies that $I_{2,1} + I_{5} \leq 0$. Additionally, by turning $\left(\cdot , \cdot\right)$ into $\langle \cdot , \cdot \rangle$, it follows that
	\begin{equation*}
		|I_{2,2}| \leq 2\alpha^{2}\mathbb{E}\left[\int_{0}^{T}\left(||R(\bar{u}_{N})||_{\mathbb{H}^{-1}} + ||R(\bar{z})||_{\mathbb{H}^{-1}} + \frac{\mathcal{K}}{\nu^{3}}||z||^{4}_{\mathbb{L}^{4}}||\bar{u}_{N} - \bar{z}||_{\mathbb{H}^{-1}}\right)||A\bar{u}_{N} - A\bar{z}||_{\mathbb{H}^{1}}dt\right].
	\end{equation*}
	By the definition of operator $R$, one gets
	\begin{equation*}
		\begin{aligned}
			||R(\bar{u}_{N}(t))||_{\mathbb{H}^{-1}} &\leq \nu||A\bar{u}_{N}(t)||_{\mathbb{H}^{-1}} + ||\tilde{B}(\bar{u}_{N}(t), v_{N}(t))||_{\mathbb{H}^{-1}}
			\\& \leq \nu||\nabla\bar{u}_{N}(t)||_{\mathbb{L}^{2}} + C_{D}||\bar{u}_{N}(t)||_{\mathbb{L}^{2}}^{\frac{1}{2}}||\nabla\bar{u}_{N}(t)||_{\mathbb{L}^{2}}^{\frac{1}{2}}||\nabla v_{N}(t)||_{\mathbb{L}^{2}},
		\end{aligned}
	\end{equation*}
	thanks to Proposition~\ref{prop trilinear term}-\textit{(iii)} and the Gagliardo-Nirenberg inequality. Therefore,
	\begin{equation*}
		\begin{aligned}
			&2\alpha^{2}\mathbb{E}\left[\int_{0}^{T}||R(\bar{u}_{N}(t))||_{\mathbb{H}^{-1}}||A\bar{u}_{N}(t) - A\bar{z}(t)||_{\mathbb{H}^{1}}dt\right] \\&\leq 2\alpha\nu\mathbb{E}\left[\sup_{t \in [0,T]}||\nabla\bar{u}_{N}(t)||_{\mathbb{L}^{2}}^{2}\right]^{\frac{1}{2}}\mathbb{E}\left[\int_{0}^{T}\alpha^{2}||A\bar{u}_{N}(t) - A\bar{z}(t)||^{2}_{\mathbb{H}^{1}}dt\right]^{\frac{1}{2}}
			\\&\hspace{10pt}+ 2\alpha C_{D}\mathbb{E}\left[\sup_{t \in [0,T]}||\bar{u}_{N}(t)||_{\mathbb{L}^{2}}||\nabla\bar{u}_{N}(t)||_{\mathbb{L^{2}}}||\nabla v_{N}(t)||_{\mathbb{L}^{2}}^{2}\right]^{\frac{1}{2}}\mathbb{E}\left[\int_{0}^{T}\alpha^{2}||A\bar{u}_{N}(t) - A\bar{z}(t)||^{2}_{\mathbb{H}^{1}}dt\right]^{\frac{1}{2}}
			\\&\lesssim 2\alpha\nu C_{3} + 2\alpha C_{D}C_{3}C_{4} \to 0 \mbox{ as } N \to +\infty ,
		\end{aligned}
	\end{equation*}
	thanks to Lemma~\ref{appen lemma a priori estimates 2} and the assumption $\alpha \leq \mathcal{C}_{max}\mu_{N}^{-3/4}$. The same goes for the remaining terms of $I_{2,2}$, which are easier to handle. Thus, $I_{2,2} \to 0$ as $N \to +\infty$. Moving on to $I_{2,3}$, we have
	\begin{equation*}
		\begin{aligned}
			|I_{2,3}| &\leq \frac{2\mathcal{K}\alpha}{\nu^{3}}\mathbb{E}\left[\sup_{t \in [0,T]}||z(t)||^{8}_{\mathbb{L}^{4}}||v_{N}(t) - z(t)||^{2}_{\mathbb{L}^{2}}\right]^{\frac{1}{2}}\mathbb{E}\left[\int_{0}^{T}\alpha^{2}||A\bar{u}_{N}(t) - A\bar{z}(t)||^{2}_{\mathbb{L}^{2}}dt\right]^{\frac{1}{2}} 
			\\&\lesssim \frac{2\mathcal{K}\alpha}{\nu^{3}}C_{2}C_{1} \to 0 \mbox{ as } N \to +\infty,
		\end{aligned}
	\end{equation*}
	by virtue of Lemma~\ref{appen lemma a priori estimates 1} and $\alpha \leq \mathcal{C}_{max}\mu_{N}^{-3/4}$. It is straightforward to show that when $N \to +\infty$, $z$ and $\bar{z}$ become equal $\mathbb{P}$-a.s. and a.e. in $[0,T] \times D$. We exploit this fact and convergence~\ref{appen conv3} to obtain $I_{3} \to -2\mathbb{E}\left[\int_{0}^{T}e^{-\rho(t)}\langle R_{0}(t) - R(z(t)), z(t)\rangle dt\right]$ as $N \to +\infty$, and convergence~\ref{appen conv1} to accomplish $I_{4} \to -2\mathbb{E}\left[\int_{0}^{T}e^{-\rho(t)}\langle R(z(t)), v_{NS}(t)\rangle dt\right]$. Similarly, $I_{6} \to 2\mathbb{E}\left[\int_{0}^{T}e^{-\rho(t)}\left(g_{0}(t), g(t, z(t))\right)_{\mathscr{L}_{2}(K, \mathbb{L}^{2})}dt\right]$, thanks to result~\eqref{appen conv4}, the continuity of $g$ with respect to its second variable, and the properties of projector $P_{N}$ which also grant the convergence of $I_{7}$ i.e. $I_{7} \to -\mathbb{E}\left[\int_{0}^{T}e^{-\rho(t)}||g(t, z(t))||^{2}_{\mathscr{L}_{2}(K, \mathbb{L}^{2})}dt\right]$. Consequently, we pass to the limit in equation~\eqref{eq calc6} while taking advantage of all generated results to achieve eventually:
	\begin{equation}\label{eq calc7}
		\begin{aligned}
			&\mathbb{E}\left[e^{-\rho(T)}||v_{NS}(T)||^{2}_{\mathbb{L}^{2}} - ||v_{NS}(0)||^{2}_{\mathbb{L}^{2}}\right] \leq \liminf\limits_{N \to +\infty}\mathbb{E}\left[e^{-\rho(T)}||v_{N}(T)||^{2}_{\mathbb{L}^{2}} - ||v_{N}(0)||_{\mathbb{L}^{2}}^{2}\right] 
			\\&\leq \frac{2\mathcal{K}}{\nu^{3}}\mathbb{E}\left[\int_{0}^{T}e^{-\rho(t)}||z(t)||_{\mathbb{L}^{4}}^{4}\left\{||z(t)||^{2}_{\mathbb{L}^{2}} - 2\left(v_{NS}(t), z(t)\right)\right\}dt\right]
			\\&-2\mathbb{E}\left[\int_{0}^{T}e^{-\rho(t)}\langle R_{0}(t) - R(z(t)), z(t)\rangle dt\right] - 2\mathbb{E}\left[\int_{0}^{T}e^{-\rho(t)}\langle R(z(t)), v_{NS}(t)\rangle dt\right] 
			\\&+2\mathbb{E}\left[\int_{0}^{T}e^{-\rho(t)}\left(g_{0}(t), g(t, z(t))\right)_{\mathscr{L}_{2}(K, \mathbb{L}^{2})}dt\right] -\mathbb{E}\left[\int_{0}^{T}e^{-\rho(t)}||g(t, z(t))||^{2}_{\mathscr{L}_{2}(K, \mathbb{L}^{2})}dt\right].
		\end{aligned}
	\end{equation}
	Next, we apply It{\^o}'s formula to the process $t \mapsto e^{-\rho(t)}||v_{NS}(t)||^{2}_{\mathbb{L}^{2}}$, where we recall that $v_{NS}$ satisfies equation~\eqref{eq limiting equation}. It holds that
	\begin{equation}\label{eq calc8}
		\begin{aligned}
			&\mathbb{E}\left[e^{-\rho(T)}||v_{NS}(T)||_{\mathbb{L}^{2}}^{2} - ||v_{NS}(0)||_{\mathbb{L}^{2}}^{2}\right] = -\frac{2\mathcal{K}}{\nu^{3}}\mathbb{E}\left[\int_{0}^{T}e^{-\rho(t)}||z(t)||_{\mathbb{L}^{4}}^{4}||v_{NS}(t)||^{2}_{\mathbb{L}^{2}}dt\right]
			\\&-2\mathbb{E}\left[\int_{0}^{T}e^{-\rho(t)}\langle R_{0}(t), v_{NS}(t)\rangle dt\right] + \mathbb{E}\left[\int_{0}^{T}e^{-\rho(t)}||g_{0}(t)||^{2}_{\mathscr{L}_{2}(K, \mathbb{L}^{2})}dt\right].
		\end{aligned}
	\end{equation}
	Plugging result~\eqref{eq calc8} in equation~\eqref{eq calc7} grants:
	\begin{equation}\label{eq calc9}
		\begin{aligned}
			&\frac{2\mathcal{K}}{\nu^{3}}\mathbb{E}\left[\int_{0}^{T}e^{-\rho(t)}||z(t)||^{4}_{\mathbb{L}^{4}}||v_{NS}(t) - z(t)||^{2}_{\mathbb{L}^{2}}dt\right] 
			\\&+ 2\mathbb{E}\left[\int_{0}^{T}e^{-\rho(t)}\langle R_{0}(t) - R(z(t)), v_{NS}(t) - z~(t)\rangle dt\right]
			\\&\geq \mathbb{E}\left[\int_{0}^{T}e^{-\rho(t)}||g_{0}(t) - g(t, z(t))||^{2}_{\mathscr{L}_{2}(K, \mathbb{L}^{2})}dt\right], \ \ \forall z \in L^{\infty}(\Omega \times (0,T); V_{m}).
		\end{aligned}
	\end{equation}
	Arguing by density, the above inequality holds for all $z \in L^{4}(\Omega; L^{\infty}(0,T; \mathbb{H})) \cap L^{2}(\Omega; L^{2}(0,T; \mathbb{V}))$. Setting $z = v_{NS}$ in equation~\eqref{eq calc9} yields $g(\cdot, v_{NS}) = g_{0}$ $\mathbb{P}$-a.s. and a.e. in $(0,T) \times D$. Furthermore, for an arbitrary $w \in L^{4}(\Omega; L^{\infty}(0,T; \mathbb{H})) \cap L^{2}(\Omega; L^{2}(0,T; \mathbb{V}))$ and $\theta \in \mathbb{R}_{+}^{*}$, we set $z = v_{NS} + \theta w$, and make use of equation~\eqref{eq calc9} once again to obtain:
	\begin{equation*}
		\begin{aligned}
			&\frac{\mathcal{K}\theta}{\nu^{3}}\mathbb{E}\left[\int_{0}^{T}e^{-\rho(t)}||v_{NS}(t) + \theta w(t)||^{4}_{\mathbb{L}^{4}}||w(t)||^{2}_{\mathbb{L}^{2}}dt\right] 
			\\&- \mathbb{E}\left[\int_{0}^{T}e^{-\rho(t)}\langle R_{0}(t) - R(v_{NS}(t) + \theta w(t)), w(t)\rangle dt\right] \geq 0.
		\end{aligned}
	\end{equation*}
	Letting $\theta$ go to $0$ and using the hemi-continuity of the operator $R$ lead to 
	\begin{equation*}
		\mathbb{E}\left[e^{-\rho(t)}\langle R_{0}(t) - R(v_{NS}(t)), w(t)\rangle dt\right] \leq 0, \ \ \forall w \in L^{4}(\Omega; L^{\infty}(0,T; \mathbb{H})) \cap L^{2}(\Omega; L^{2}(0,T; \mathbb{V})),
	\end{equation*}
	which eventually implies $R_{0} = R(v_{NS})$ in $L^{2}(\Omega; L^{2}(0,T; \mathbb{H}^{-1}))$. The acquired limiting function $v_{NS}$ satisfies the following lemma.
	\begin{lem}\label{lemma a priori estimate v}
		Let $T > 0$, $1 \leq p < +\infty$, and $N \in \mathbb{N}\backslash \{0\}$ be given. Assume that hypotheses \ref{S1}-\ref{S2} are fulfilled, and that for some constant $\mathcal{C}_{max} > 0$ independent of $N$, the spatial scale $\alpha \leq \mathcal{C}_{max}\mu_{N}^{-3/4}$. Then, the process $\left\{v_{NS}(t), t \in [0,T]\right\}$ fulfills:
		\begin{enumerate}[label=(\roman*)]
			\item $\displaystyle \mathbb{E}\left[\sup_{t \in [0,T]}\left|\left|v_{NS}(t)\right|\right|^{2p}_{\mathbb{L}^{2}} + 2p\nu\int_{0}^{T}\left|\left|v_{NS}(t)\right|\right|^{2(p-1)}_{\mathbb{L}^{2}}\left|\left|\nabla v_{NS}(t)\right|\right|^{2}_{\mathbb{L}^{2}}dt\right] \leq C_{2}$,
			\item $\displaystyle \mathbb{E}\left[\sup_{t \in [0,T]}\left|\left|\nabla v_{NS}(t)\right|\right|_{\mathbb{L}^{2}}^{2p} + \left(\nu\int_{0}^{T}\left|\left|Av_{NS}(t)\right|\right|^{2}_{\mathbb{L}^{2}}\right)^{p}\right] \leq C_{4}$,
		\end{enumerate}
		where $C_{2} > 0$ depends on constants $\mathcal{C}_{max}$, $C_{1}$ of Lemma~\ref{appen lemma a priori estimates 1} and its parameters, and $C_{4} > 0$ depends on $C_{1}$, $||\bar{u}_{0}||_{L^{6p}(\Omega; \mathbb{V})}$ and $\mathcal{C}_{max}$.
	\end{lem}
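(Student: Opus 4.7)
Since, earlier in this section, the limit $v_{NS}$ has been identified as satisfying equation~\eqref{eq limiting equation} with $R_{0} = -\nu\Delta v_{NS} + \tilde{B}(v_{NS}, v_{NS})$ and $g_{0} = g(\cdot, v_{NS})$, and enjoys the regularity $v_{NS} \in L^{2}(\Omega; C([0,T]; \mathbb{H}) \cap L^{2}(0,T; \mathbb{V}))$, my plan is to apply It\^o's formula directly to $v_{NS}$, mirroring the Galerkin computations of Lemmas~\ref{appen lemma a priori estimates 1}-\textit{(iii)} and~\ref{appen lemma a priori estimates 2}-\textit{(ii)} but with $\alpha$ set to $0$. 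The constants $C_{2}$ and $C_{4}$ produced by this route carry exactly the dependencies announced in the statement.

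For \textit{(i)}, I would apply It\^o's formula to $F(v_{NS}) = \lvert\lvert v_{NS}\rvert\rvert_{\mathbb{L}^{2}}^{2p}$ up to the stopping time $\tau^{n} \coloneqq \inf\{t \in [0,T] : \lvert\lvert v_{NS}(t)\rvert\rvert_{\mathbb{L}^{2}} > n\}$, reproducing the scheme that leads to equation~\eqref{eq calc1} there. The trilinear contribution $\langle \tilde{B}(v_{NS}, v_{NS}), v_{NS}\rangle$ vanishes by Proposition~\ref{prop trilinear term}-\textit{(i)}, assumption~\ref{S2} controls the It\^o correction and the diffusion moments, and Gr\"onwall's inequality followed by $n \to +\infty$ gives the $\sup$-moment and the integrated gradient bound in one sweep. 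The supremum inside the expectation is then recovered via the Burkholder--Davis--Gundy inequality applied to the martingale part, exactly as in the passage from \textit{(i)} to \textit{(ii)} of Lemma~\ref{appen lemma a priori estimates 1}. This yields $C_{2}$ with the announced dependence on $\mathcal{C}_{max}$ and $C_{1}$.

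For \textit{(ii)}, the same scheme applied to $\lvert\lvert \nabla v_{NS}\rvert\rvert_{\mathbb{L}^{2}}^{2p}$ with $\tau^{n} \coloneqq \inf\{t \in [0,T] : \lvert\lvert \nabla v_{NS}(t)\rvert\rvert_{\mathbb{L}^{2}} > n\}$ works. The key structural simplification compared with Lemma~\ref{appen lemma a priori estimates 2} is that the troublesome $\alpha^{2}$ contribution $B_{2}$ (which forced the $\mathcal{C}_{max}\mu_{N}^{-3/4}$ constraint there) disappears, so the nonlinear term reduces to $\big([v_{NS}\cdot\nabla]v_{NS}, Av_{NS}\big)$, which vanishes in the 2D periodic framework by \cite[Lemma 3.1]{temam1995navier}. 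The bound on $\lvert\lvert g(\cdot, v_{NS})\rvert\rvert_{\mathscr{L}_{2}(K, \mathbb{H}^{1})}$ from~\ref{S2} handles the It\^o correction, Lemma~\ref{lemma v0 regularity} controls the initial datum term, and Gr\"onwall plus Burkholder--Davis--Gundy close the estimate with $C_{4}$ inheriting dependence on $C_{1}$, $\lvert\lvert \bar{u}_{0}\rvert\rvert_{L^{6p}(\Omega; \mathbb{V})}$ and $\mathcal{C}_{max}$.

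The main obstacle is the rigorous justification of It\^o's formula for $\lvert\lvert v_{NS}\rvert\rvert_{\mathbb{L}^{2}}^{2p}$ and $\lvert\lvert \nabla v_{NS}\rvert\rvert_{\mathbb{L}^{2}}^{2p}$ at the currently available regularity of $v_{NS}$ and for arbitrary $p \geq 1$. This is the Krylov--Rozovskii variational setup (see \cite{prevot2007concise}), which requires $v_{NS}$ to obey a variational equation in the Gelfand triple $(\mathbb{V}, \mathbb{H}, \mathbb{V}')$ with drift in $L^{2}(\Omega; L^{2}(0,T; \mathbb{V}'))$; both have been secured by the limits identification (with $R_{0} = R(v_{NS}) \in L^{2}(\Omega; L^{2}(0,T; \mathbb{V}'))$ proven just before the statement), so the formula applies on $[0, t \wedge \tau^{n}]$ after the standard stopping-time truncation, and the rest of the argument is a routine transcription of the Galerkin computation without $\alpha$. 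As a fallback, one could alternatively inherit the bounds by weak-$\ast$ lower semicontinuity along the subsequence $(v_{N_{\ell}})_{\ell}$, upgrading the convergences \eqref{appen conv1}--\eqref{appen conv2} to weak-$\ast$ in $L^{2p}(\Omega; L^{\infty}(0,T; \mathbb{L}^{2}))$ thanks to the uniform $L^{2p}(\Omega)$ bounds of Lemmas~\ref{appen lemma a priori estimates 1} and~\ref{appen lemma a priori estimates 2}; the only delicate piece in that route is the nonconvex product $\lvert\lvert v\rvert\rvert^{2(p-1)}\lvert\lvert \nabla v\rvert\rvert^{2}$ in \textit{(i)}, which would require an Aubin--Lions strong-convergence step applied to the auxiliary field $\phi_{N}(t) \coloneqq \lvert\lvert v_{N}(t)\rvert\rvert_{\mathbb{L}^{2}}^{p-1}\nabla v_{N}(t)$.
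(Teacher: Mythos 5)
Your primary route (applying It\^o's formula directly to the limit process $v_{NS}$) is not the paper's argument, and as written it has a genuine gap at part \textit{(ii)}. The Krylov--Rozovskii variational It\^o formula you invoke is attached to the Gelfand triple $(\mathbb{V}, \mathbb{H}, \mathbb{V}')$ and therefore only delivers the formula for $||v_{NS}||^{2}_{\mathbb{L}^{2}}$ and smooth functions thereof; it says nothing about $||\nabla v_{NS}||^{2p}_{\mathbb{L}^{2}}$. To differentiate the $\mathbb{V}$-norm you would have to work in the shifted triple $(D(A), \mathbb{V}, \mathbb{H})$, which presupposes that $v_{NS} \in L^{2}(\Omega; L^{2}(0,T; D(A)))$ and that the drift $\nu A v_{NS} + \tilde{B}(v_{NS}, v_{NS})$ is $\mathbb{H}$-valued --- precisely the regularity that estimate \textit{(ii)} is meant to establish, so the argument is circular at this stage. (The algebra you sketch, in particular the cancellation $\left([v_{NS}\cdot\nabla]v_{NS}, Av_{NS}\right) = 0$ in the two-dimensional periodic setting, is correct once the formula is available; the availability is the problem.) A secondary wrinkle in part \textit{(i)} is that assumption~\ref{S2} controls $g$ through $||\cdot||_{\alpha}$, which dominates $||\cdot||_{\mathbb{L}^{2}}$ from above only after absorbing an $\alpha^{2}||\nabla\cdot||^{2}_{\mathbb{L}^{2}}$ term; this is harmless but not automatic for a limit object in which $\alpha$ no longer appears.

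What the paper actually does is exactly your fallback: it inherits both bounds from the uniform Galerkin estimates of Lemma~\ref{appen lemma a priori estimates 2}-\textit{(ii)} by weak-$\ast$ lower semicontinuity in $L^{2p}(\Omega; L^{\infty}(0,T;\mathbb{V}))$ and weak lower semicontinuity in $L^{2p}(\Omega; L^{2}(0,T;D(A)))$ along a subsequence, identifying the weak limits with $v_{NS}$ through convergence~\eqref{appen conv2} and the uniqueness of the Navier--Stokes solution (which upgrades subsequential convergence to convergence of the whole sequence). Your concern about the nonconvex functional $\int_{0}^{T}||v||^{2(p-1)}_{\mathbb{L}^{2}}||\nabla v||^{2}_{\mathbb{L}^{2}}dt$ in \textit{(i)} is legitimate if one attacks \textit{(i)} by lower semicontinuity directly, but the paper sidesteps it by deducing \textit{(i)} from \textit{(ii)}: by the Poincar\'e inequality, $\sup_{t}||v_{NS}(t)||^{2p}_{\mathbb{L}^{2}}$ and $\int_{0}^{T}||v_{NS}||^{2(p-1)}_{\mathbb{L}^{2}}||\nabla v_{NS}||^{2}_{\mathbb{L}^{2}}dt$ are both controlled by $\sup_{t}||\nabla v_{NS}(t)||^{2p}_{\mathbb{L}^{2}}$ up to constants depending on $T$, $p$ and $D$, so no Aubin--Lions step is needed. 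I recommend you drop the direct It\^o route for \textit{(ii)} and promote the lower-semicontinuity argument from fallback to main proof.
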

	\begin{proof}
		We only illustrate here the proof of estimate \textit{(ii)} as \textit{(i)} can be concluded from \textit{(ii)}. Let $p \geq 1$. On account of Lemma~\ref{appen lemma a priori estimates 2}-\textit{(ii)}, the sequence $(v_{N})_{N}$ is bounded in $L^{2p}(\Omega; L^{\infty}(0,T; \mathbb{V}))$ which implies the existence of a function $\xi \in L^{2p}(\Omega; L^{\infty}(0,T; \mathbb{V}))$ such that for some subsequence $(v_{N_{\ell}})_{\ell}$, it holds that $v_{N_{\ell}} \overset{\ast}{\rightharpoonup} \xi$ in $L^{2p}(\Omega; L^{\infty}(0,T; \mathbb{V})) \hookrightarrow L^{2}(\Omega; L^{\infty}(0,T; \mathbb{H}))$, and 
		\begin{equation*}
			\mathbb{E}\left[\sup_{t \in [0,T]}||\xi(t)||^{2p}_{\mathbb{V}}\right] \leq \liminf\mathbb{E}\left[\sup_{t \in [0,T]}||v_{N_{\ell}}(t)||^{2p}_{\mathbb{V}}\right] \leq C_{4},
		\end{equation*}
		thanks to Lemma~\ref{appen lemma a priori estimates 2}-\textit{(ii)}. By convergence~\ref{appen conv2} and the weak limit uniqueness, we infer that $\xi = v_{NS}$ $\mathbb{P}$-a.s. and a.e. in $(0,T)\times D$. This is valid because $v_{NS}$ is the unique solution to equations~\eqref{eq NS} which means that the whole sequence $(v_{N})_{N}$ is convergent. Arguing in a similar fashion, and owing to Lemma~\ref{appen lemma a priori estimates 2}-\textit{(ii)}, $(v_{N})_{N}$ is bounded in the reflexive Banach space $L^{2p}(\Omega; L^{2}(0,T; D(A)))$, which signifies that for some $(v_{N_{\ell}})_{\ell}$ and $\eta \in L^{2p}(\Omega; L^{2}(0,T; D(A)))$, we have $v_{N_{\ell}} \rightharpoonup \eta$ in $L^{2p}(\Omega; L^{2}(0,T; D(A)))$ and 
		\begin{equation*}
			\mathbb{E}\left[\left(\nu\int_{0}^{T}||A\eta(t)||^{2}_{\mathbb{L}^{2}}dt\right)^{p}\right] \leq \liminf \mathbb{E}\left[\left(\nu\int_{0}^{T}||Av_{N_{\ell}}||^{2}_{\mathbb{L}^{2}}dt\right)^{p}\right] \leq C_{4}.
		\end{equation*}
		As done earlier in this proof, one obtains $\eta = v_{NS}$ $\mathbb{P}$-a.s. and a.e. in $(0,T) \times D$.
	\end{proof}
	
	\section{Conclusion}\label{section conclusion}
	Owing to Section~\ref{section convergence}, the limiting function $v_{NS}$ satisfies for all $t \in [0,T]$, and $\mathbb{P}$-a.s. the following equation:
	\begin{equation*}
		\begin{aligned}
			&\left(v_{NS}(t), \varphi\right) + \nu\int_{0}^{t}\left(\nabla v_{NS}(s), \nabla \varphi\right) + \int_{0}^{t}\tilde{b}(v_{NS}(s), v_{NS}(s), \varphi)ds 
			\\&= \left(v_{0}, \varphi\right) + \left(\int_{0}^{t}g(s, v_{NS}(s))dW(s), \varphi\right), \ \ \forall \varphi \in \mathbb{V}.
		\end{aligned}
	\end{equation*}
	By virtue of Proposition~\ref{prop trilinear term}, one gets 
	\begin{equation*}
		\tilde{b}(_{NS}(s), v_{NS}(s), \varphi) = - \tilde{b}(\varphi, v_{NS}(s), v_{NS}(s)) = \left([v_{NS}(s)\cdot\nabla]v_{NS}(s), \varphi\right).
	\end{equation*}
	Moreover, as mentioned in Section~\ref{section convergence}, $v_{NS}$ belongs to $L^{2}(\Omega; C([0,T]; \mathbb{H}))$. Besides the latter fact, Lemma~\ref{lemma a priori estimate v} guarantees that $v_{NS} \in L^{2}(\Omega; L^{2}(0,T; \mathbb{V}))$. Consequently, collecting all results and comparing them with Definition~\ref{definition NS solution}, it follows that $v_{NS}$ is the unique solution to equations~\eqref{eq NS} in the sense of Definition~\ref{definition NS solution}.
	
	The convergence analysis followed in this paper could have been carried out differently. For instance, instead of controlling the spatial scale $\alpha$ with a quantity that vanishes at the limit, a convergence rate of the difference $\left|\left|v_{NS}- \bar{u}\right|\right|$ in terms of $\alpha$ could have made up an alternative approach, as conducted in \cite{Bessaih2014Paul} for the stochastic Leray-$\alpha$ equations. We emphasize the uselessness of the imposed periodic boundary conditions if high spatial regularities of the solution were not utilized. In this case, Dirichlet boundary conditions are required.
	
	The demonstration techniques employed in this paper are only functional for two-dimensional domains. In three dimensions. another approach must be applied to acquire a solution to the stochastic Navier-Stokes problem from the stochastic LANS-$\alpha$ model, as performed in article~\cite{deugoue2011weak}.
	
	\printbibliography
\end{document}